\newcommand{\concat}{\ensuremath{+\!\!\!\!+\,}}
\renewcommand{\P}{{\mathbb P}}
\newcommand{\E}{{\mathbb E}}
\newcommand{\R}{{\mathbb R}}
\newcommand{\ZZ}{{\mathbb Z}}
\newcommand{\NN}{{\mathbb N}}
\newcommand{\GG}{{\mathbb G}}
\newcommand{\FF}{{\mathbb F}}
\newcommand{\0}{{\mathbf 0}}
\newcommand{\cc}{{\mathbf c}}
\newcommand{\zz}{{\mathbf z}}
\newcommand{\yy}{{\mathbf y}}
\newcommand{\xx}{{\mathbf x}}
\newcommand{\ee}{{\mathbf e}}
\newcommand{\dd}{{\mathbf d}}
\newcommand{\cal}{\mathcal}
\newcommand{\F}{{\cal F}}
\newcommand{\Exp}{{\rm Exp}}
\newcommand{\argmax}{\mathop{\rm arg\,max}}
\newtheorem{thm}{Theorem}
\newtheorem{coro}{Corollary}
\newtheorem{lem}{Lemma}
\newtheorem{prop}{Proposition}
\newtheorem{rem}{Remark}
\numberwithin{equation}{section}
\begin{document}
	\title[Geodesic Forests in LPP]{Geodesic Forests in the Last-Passage Percolation}
	
	\author[Sergio I. L\'opez]{Sergio I. L\'opez*}
	\author[Leandro P. R. Pimentel]{Leandro P. R. Pimentel$\dag$}
	
	\address[*]{Departamento  de Matem\'aticas\\ Facultad de Ciencias, UNAM\\
		C.P. 04510, Distrito Federal, M\'exico}
	\address[$\dag$]{Instituto de Matem\'atica\\ Universidade Federal do Rio de Janeiro\\
		Caixa Postal 68530, CEP 21941-909 Rio de Janeiro, RJ, Brasil}
	
	\email[*]{silo@ciencias.unam.mx}
	\email[$\dag$]{leandro@im.ufrj.br, lprpimentel@gmail.com}

	\thanks{Leandro P. R. Pimentel was partially supported by the CNPQ grant 474233/2012-0. Sergio I. L\'opez was partially funded by the CAPES
		grant \textit{Jovens Talentos} 063/2013}
	
	\date{\today}

	\begin{abstract}
		The aim of this article is to study the forest composed by point-to-line  geodesics in the last-passage percolation model with exponential weights. We will show that the location of the root can be described in terms of the maxima of a random walk, whose distribution will depend on the geometry of the substrate (line). For flat substrates, we will get power law behaviour of the height function, study its scaling limit, and describe it in terms of variational problems involving the Airy process.       
	\end{abstract}
	
	\maketitle
	
	\section{Introduction}

	\subsection{Introduction}
	The motivation of this article comes from the work of T. Antunovi\'c and E. B. Procaccia  \cite{AP} on \emph{geodesic forests} in first-passage percolation models (we restrict ourselves to the square lattice context). Give a bi-infinite nearest-neighbour path $\phi$ (also called line or substrate), the geodesic forest is the collection of paths composed by point-to-$\phi$ geodesics. It was proven by them that, if the initial substrate is flat, then a.s. every geodesic tree in the forest is finite. On the other hand, it is expected that if the initial substrate $\phi$ has a macroscopic convex wedge, then the tree rooted at the origin is infinite (percolation phenomena)\footnote{This problem was communicated to us by D. Ahlberg, as a conjecture proposed by I. Benjamini.}, with positive probability. We address the reader to \cite{DH} for further discussions on the first-passage percolation model with exponential passage times (Richardson  model). 
	
	The results proved by Antunovi\'c and Procaccia \cite{AP} in the first-passage percolation context can be extended mutatis mutandis to last-passage percolation models, under fairly general assumptions on the weight distribution. On the other hand, the percolation phenomena is expected to occur for substrates with a macroscopic concave wedge (we call it the concave wedge conjecture). General last-passage or first-passage percolation models are known to be very hard to analise, and still fundamental questions concerning the shape function have not yet been solved. These difficulties impose serious obstacles to understand the geometry of geodesics.   
	
	However, there are a few exceptions where the shape function is explicitly known and fluctuations results also are available. In this article we will consider one of them, namely, the exponential last-passage percolation model, where the weights are sampled from the exponential distribution. It is well known that the exponential model enjoys some crucial symmetries (like Burke's property) that allows one to find nice formulas for related invariant measures, and to use them to compute important objects, such as the shape function \cite{Ro} and the probability distribution of the asymptotic slope of the competition interface \cite{CaPi2,FePi}. Based on these special properties, we will give a positive answer to the concave wedge conjecture, and show that the probability of percolation, in a fixed direction, of the tree rooted at the origin equals the probability that a two-sided random walk with a negative drift stays below $0$. This will follow from a distributional description of the the location of the root in terms of the location of the maxima of such random walk, and it will also allow us to study the number of disjoint trees that percolates.
	
	For flat substrates, we will prove a power law behaviour of the height of a tree, with exponent $2/3$, and get some results that partially describes the limit, in the $m^{3/2}$ scale, of the maxima of the height function on a interval of size $m$. These results will connect this scenario with variational problems involving the Airy process and, consequently, with the Kardar-Parisi-Zhang (KPZ) universality class. We will also relate the height of a tree with coalescence times of semi-infinite geodesics.
	
	The proofs of the aforementioned results are not technically demanding, and they rely on the relation between geodesics and the associated Busemann field \cite{CaPi}. They parallel the method developed in \cite{CaPi2} to obtain the asymptotic slope of completion interfaces. For flat substrates, the proofs of the power law and scaling results make use of scaling properties of a point process composed by locations of maxima.  A similar approach can be found in \cite{Pi}, to deal with coalescence times of semi-infinite geodesics.
	
	\section{Definitions and Results} 
	
	\subsection{Exponential Last-Passage Percolation}
	Consider a collection of i.i.d. random variables $\{W_\xx\,:\,\xx\in\ZZ^2\}$ (also called weights), distributed according to an exponential distribution function of parameter one. In last-passage site percolation (LPP) models, each number $W_\xx$ is interpreted as the passage (or percolation) time through  vertex $\xx=(x(1),x(2))$. For $\ZZ^2$ lattice vertices $\xx\leq \yy$ (i.e. $x(i)\leq y(i)\,,i=1,2$), denote $\Gamma(\xx,\yy)$ the set of all up-right oriented paths $\gamma=(\xx_0,\xx_1\dots,\xx_k)$ from $\xx$ to $\yy$, i.e. $\xx_0=\xx$, $\xx_k=\yy$ and $\xx_{j+1}-\xx_j\in\{\ee_1,\ee_2\}$, for $j=0,\dots,k-1$, where $\ee_1:=(1,0)$ and $\ee_2=(0,1)$. The weight (or passage time) along $\gamma$ is defined as 
	$$W(\gamma):=\sum_{j=0}^{k} W_{\xx_i}\,.$$ 
	The \emph{last-passage time} between $\xx$ and $\yy$ (point to point) is defined as 
	\begin{equation*}
		L(\xx,\yy):=\max_{\gamma\in\Gamma(\xx,\yy)}W(\gamma)\,.
	\end{equation*}
	The \emph{geodesic} from $\xx$ to $\yy$ is the a.s. unique maximising path $\gamma(\xx,\yy)\in\Gamma(\xx,\yy)$ such that   
	$$L(\xx,\yy)=W(\gamma(\xx,\yy))\,.$$
	
	Let $\phi=(\phi_z)_{z\in\ZZ}$ denote a down-right bi-infinite path in $\ZZ^2$  passing through the origin: $\phi_0=\0$ and $\phi_{z+1}-\phi_z\in\{-\ee_2,\ee_1\}$. The substrate can be deterministic or random, and in the last case we are always considering
	it being independent from the weights $\{W_\xx\,:\,\xx\in\ZZ^2\}$. Let us denote by $\P_\phi$ the law of the geodesic forest for a given substrate $\phi$ and by $\P$ the law of the geodesic
	forest where $\phi$ is also random. The path $\phi$ splits $\ZZ^2$ into two disjoint regions, and we take 
	$$\Upsilon:=\left\{\xx\in\ZZ^2\,:\,\exists\,z\in\ZZ\mbox{ s.t. }\phi_z<\xx\right\}\,.$$
	We call $\phi$ the initial substrate and $\Upsilon$ the growth region. We assume that $\phi$ has a macroscopic concave wedge, i.e. there exist $\lambda_-,\lambda_+\in(0,\infty)$, with $\lambda_->\lambda_+$, such that
	\begin{equation}\label{slope}
		\lim_{z\to-\infty}\frac{\phi_z(2)}{\phi_z(1)}= -\lambda_-\,\,\mbox{ and }\,\,\lim_{z\to+\infty}\frac{\phi_z(2)}{\phi_z(1)}= -\lambda_+\,,
	\end{equation}
	(where $\phi_z(i)$ is the $i$-coordinate of $\phi_z$). We say that $\zz=\phi_z$ is a (microscopic) concave corner of $\phi$ if $\phi_z-\phi_{z-1}=-\ee_2$ and $\phi_{z+1}-\phi_z=\ee_1$, and we denote $\cal{C}(\phi)$ the set of all concave corners of $\phi$. 
	 We also define 
	$$\cal{C}_\xx(\phi):=\left\{\zz\in\cal{C}(\phi)\,:\, \zz< \xx\right\}\,.$$
	Hence $\#\cal{C}_\xx(\phi)<\infty$ for all $\xx\in\Upsilon$. 
	\begin{figure}
		\includegraphics[height=6cm]{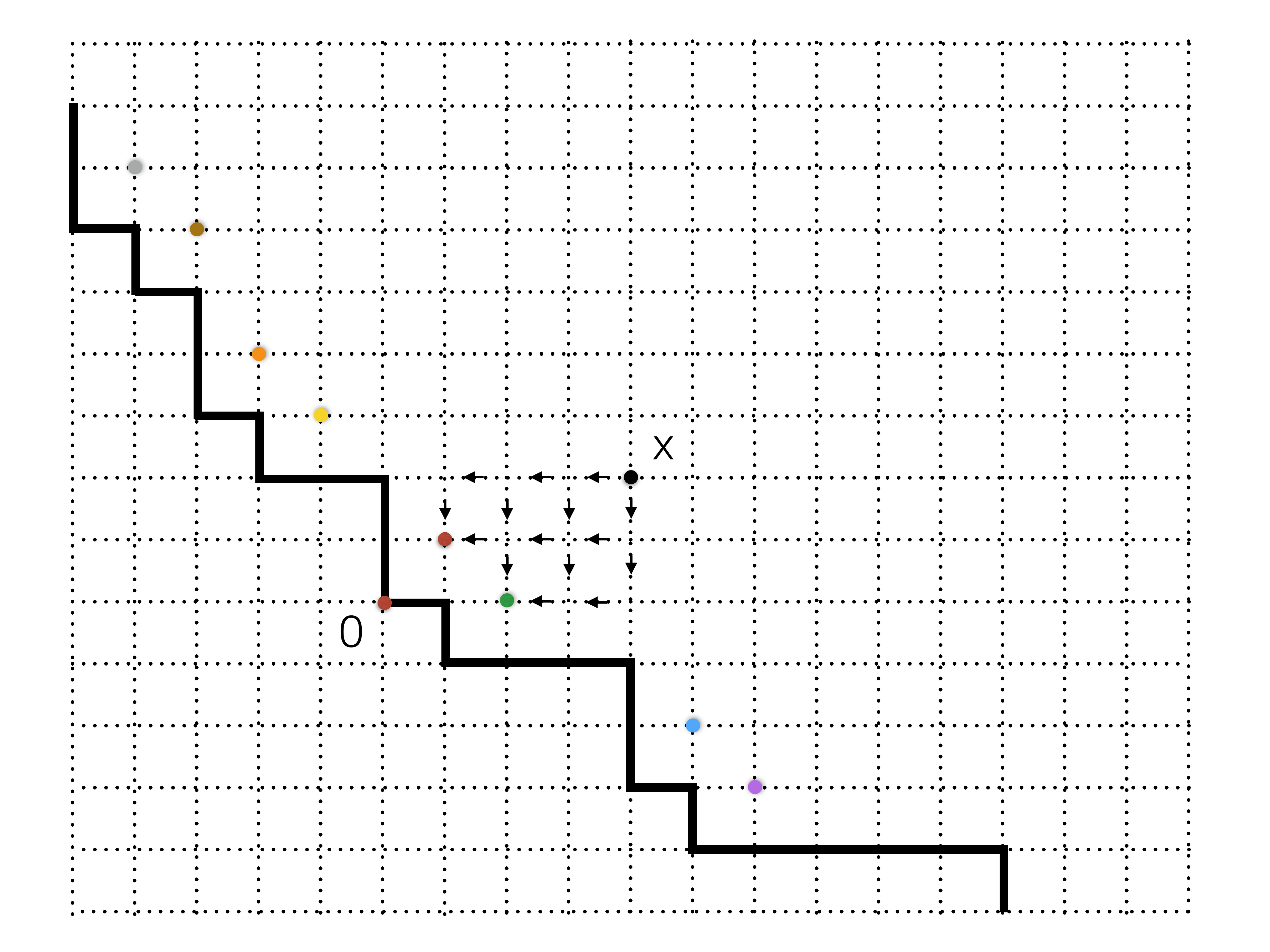}
		\caption{The substrate (black line) and the point to substrate paths.}
	\end{figure}
		
	Denote $\dd=(1,1)$ and define the (backward)  \emph{point to line last-passage time} from $\xx\in\Upsilon$ to $\phi$ as 
	$$L_\phi(\xx):=\max_{\zz\in\cal{C}_\xx(\phi)} L(\zz+\dd,\xx)\,.$$
	We note that we could have take the maximisation  over all possible $\zz$ in the substrate (there a finite number of them), however the maximum path would always start at a concave corner. The geodesic between $\phi$ and $\xx\in\Upsilon$ is defined as $\gamma_{\phi}(\xx):=\gamma(\Phi(\xx)+\dd,\xx)$, where 
	\begin{equation}\label{rootmax}
		\Phi(\xx):=\argmax _{\zz\in\cal{C}_\xx(\phi)} L(\zz+\dd,\xx)\,,
	\end{equation}
	so that, 
	$$L_\phi(\xx)=W(\gamma_{\phi}(\xx))\,.$$
	We call $\Phi(\xx)$ the root of $\gamma_{\phi}(\xx)$. If $\Phi(\xx)=\zz$ we also say that $\xx$ has root $\zz$. 
	
	We define the \emph{geodesic forest} $\F_{\phi}$ (with substrate $\phi$) as
	$$\F_{\phi}:=\left\{\gamma_\phi(\xx)\,:\,\xx\in \Upsilon\right\}\,.$$
	Then $\F_\phi$ is the union of \emph{geodesic trees} rooted at the concave corners of $\phi$,
	$$\F_\phi=\cup_{\zz\in\cal{C}(\phi)}\cal{T}_\zz\,,$$
	where 
	$$\cal{T}_\zz:=\left\{\gamma_\phi(\xx)\,:\,\Phi(\xx)=\zz\right\}\,.$$
	\begin{figure}
		\includegraphics[height=6cm]{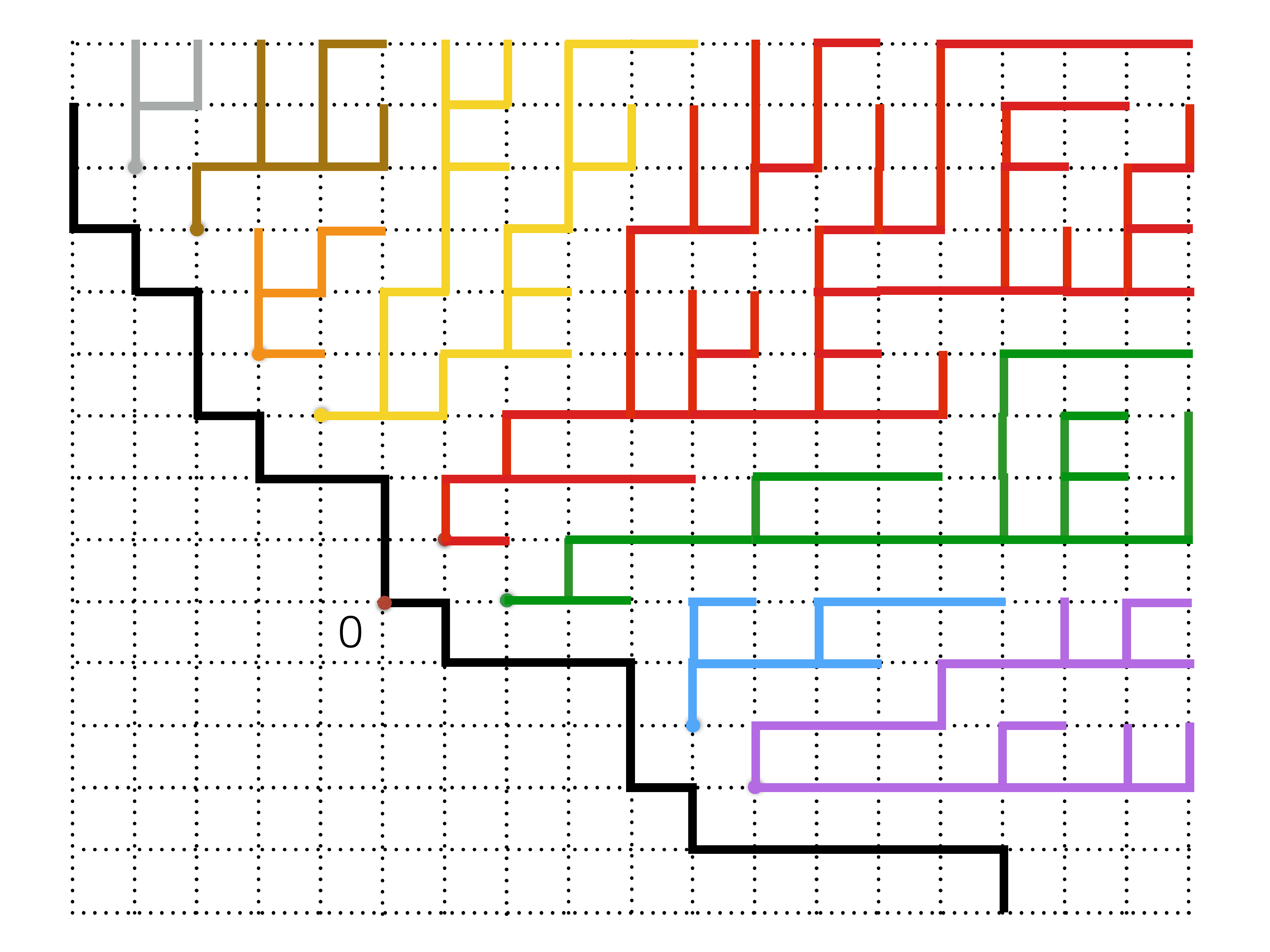}
		\caption{The geodesic forest. Each tree is coloured according to its root.}
	\end{figure}
	
	For a fixed $a>0$, we say $\zz\in\cal{C}(\phi)$ is the \emph{asymptotic root} in the direction $(1,a)$, if for every sequence of lattice points $\xx_n=(x_n(1),x_n(2))$, $n\geq 1$, in $\Upsilon$ such that 
	$$\lim_{n\to\infty}(x_n(1),x_n(2))=\infty\,\,\mbox{ and }\,\,\lim_{n\to\infty}\frac{x_n(2)}{x_n(1)}= a\,,$$
	there exists $n_0$ such that $\Phi(\xx_n)=\zz$ for all $n\geq n_0$. In this case, we denote $\Phi(a):=\zz$. The first goal of this paper is to characterise the set of directions for which there is a.s. an asymptotic root and, furthermore, to describe the distribution of the location of the root along the substrate.  
	
	To state the results we need to introduce a two-sided random walk whose distribution will depend on the initial substrate $\phi$ and on the slope $a>0$ of interest. This random walk is constructed by summing independent exponential increments along the initial substrate $\phi$. The parameter associated to the exponentials  depends on the orientation of the edge $(\phi_{z-1},\phi_{z})$ as follows. Let $\{\Exp_z(\rho):z\in\ZZ\}$ and $\{\Exp'_z(1-\rho):z\in\ZZ\}$ be independent collections of i.i.d. exponential random variables of intensity $\rho$ and $1-\rho$, respectively. These collections are also assumed to be independent of $\phi$, whenever $\phi$ is random. Denote 
	$$\rho_a:=\frac{\sqrt{a}}{1+\sqrt{a}}\,.$$
	For $z\in\{1,2,\dots\}$ let 
	\begin{equation*}
		X_{z}^{a,\phi}=\left\{\begin{array}{ll}- \Exp'_z(1-\rho_a) & \mbox{ if }\,\,\, \phi_z-\phi_{z-1}=\ee_1\,\\
			\Exp_z(\rho_a) & \mbox{ if }\,\,\, \phi_z-\phi_{z-1}=-\ee_2\,\end{array}\right.
	\end{equation*}
	(the increment along the edge $(\phi_{z-1},\phi_z)$), and for $z\in\{0,-1,-2,\dots\}$ let 
	\begin{equation*}
		X_{z}^{a,\phi}=\left\{\begin{array}{ll} \Exp'_z(1-\rho_a) & \mbox{ if }\,\,\, \phi_{z-1}-\phi_{z}=-\ee_1\,\\
			-\Exp_z(\rho_a) &  \mbox{ if }\,\,\, \phi_{z-1}-\phi_z=\ee_2\,\end{array}\right.
	\end{equation*}
	(increment along the edge $(\phi_z,\phi_{z-1})$). Set $S^{\rho,\phi}(0)=0$,  
	\begin{equation}\label{rw}
		S^{a,\phi}(z)=\sum_{k=1}^{z}X_{k}^{a,\phi}\,,\,\mbox{ for }z>0\,,\mbox{ and }\,\,S^{a,\phi}(z)=\sum_{k=z}^{-1}X_{k+1}^{a,\phi}\,,\,\mbox{ for }z<0\,.
	\end{equation}
	We note that, for $a\in(\lambda^2_{+},\lambda^2_{-})$ (\emph{the rarefaction interval}), both sides of this random walk have a negative drift.

	\begin{thm}\label{rootlaw}
		Consider the geodesic forest with a concave substrate $\phi$ satisfying \eqref{slope} and fix $a\in(\lambda^2_{+},\lambda^2_{-})$. Then $\P_\phi$-a.s. it has an asymptotic root $\Phi(a)\in\cal C(\phi)$. Furthermore, if we set $Z_\phi(a)=z\in\ZZ$ such that $\Phi(a)=\phi_z$, then
		\begin{equation}\label{substrate}
			Z_\phi(a) \stackrel{d}{=} \arg\max_{z\in\ZZ} S^{a,\phi}(z)\,.
		\end{equation}
	\end{thm}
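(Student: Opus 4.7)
The plan is to translate the question about the asymptotic root into an argmax problem for a random walk along $\phi$, using the Busemann function in direction $(1,a)$, and then to verify that this argmax really controls the root along sequences in direction $(1,a)$.

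First, I would invoke the Busemann function machinery for exponential LPP developed in \cite{CaPi}: for each $a>0$ and any $\xx_n\to\infty$ in direction $(1,a)$, the limits
$$B^a(\xx,\yy) := \lim_{n\to\infty} \bigl[L(\yy,\xx_n) - L(\xx,\xx_n)\bigr]$$
exist almost surely, form an additive cocycle, and have the key property that along any fixed down-right lattice path their edge increments are independent, distributed as $\Exp(\rho_a)$ on downward edges and as $-\Exp(1-\rho_a)$ on rightward edges. Applied to the shifted substrate $\phi+\dd$, this gives that $M(z):=B^a(\phi_0+\dd,\phi_z+\dd)$ is a two-sided walk whose increments can be matched edge by edge with the definition \eqref{rw}, yielding $\{M(z)\}_{z\in\ZZ} \stackrel{d}{=} \{S^{a,\phi}(z)\}_{z\in\ZZ}$.

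Second, I would observe that the hypothesis $a\in(\lambda_+^2,\lambda_-^2)$ together with \eqref{slope}, via the law of large numbers applied to the independent increments, forces $M$ to have strictly negative drift on both arms, so $\argmax_{z\in\ZZ} M(z)$ is a.s.\ finite and unique. The sign pattern ($+$ on downward, $-$ on rightward edges) further implies that any strict local maximum of $M$ must occur at an index $z$ with $\phi_z-\phi_{z-1}=-\ee_2$ and $\phi_{z+1}-\phi_z=\ee_1$, i.e.\ exactly at a concave corner; hence $\phi_{\argmax M}\in\cal{C}(\phi)$ a.s.

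The remaining and technically delicate step is to prove $\Phi(\xx_n) = \phi_{\argmax M}$ for all $n$ large, almost surely. For each fixed $\zz\in\cal{C}(\phi)$ the definition of $B^a$ already gives
$$L(\zz+\dd,\xx_n) - L(\phi_0+\dd,\xx_n) \;\longrightarrow\; M(z) \quad\text{a.s.},$$
so on any finite window $|z|\leq K$ the argmax of $L(\cdot,\xx_n)$ restricted to the window eventually agrees with the restricted argmax of $M$, and for $K$ exceeding $|\argmax M|$ this equals $\phi_{\argmax M}$. The main obstacle is therefore the tail: one must rule out that $\Phi(\xx_n)=\phi_z$ for some arbitrarily large $|z|$ as $n\to\infty$. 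I expect to handle this by combining the shape theorem with \eqref{slope}: for $|z|$ large, the macroscopic direction from $\phi_z+\dd$ to $\xx_n$ lies strictly outside the rarefaction cone, and the explicit exponential shape function $(\sqrt{u}+\sqrt{v})^2$ forces $L(\phi_z+\dd,\xx_n)-L(\phi_0+\dd,\xx_n)$ to be bounded above by a deterministic quantity of order $-c|z|$ while the transverse fluctuations are of smaller order. This uniform tail estimate together with the finite-window convergence yields $\Phi(\xx_n)=\phi_{\argmax M}$ eventually, and then \eqref{substrate} follows from the distributional identity of step one.
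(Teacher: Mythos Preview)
Your proposal is correct and follows essentially the same route as the paper: identify the Busemann increments along $\phi+\dd$ with the walk $S^{a,\phi}$ (this is the paper's Lemma~\ref{Busemann}, taken from \cite{CaPi,CaPi2}), and then combine finite-window convergence of $L(\phi_z+\dd,\xx_n)-L(\dd,\xx_n)$ to $B_a$ with a tail bound showing $\Phi(\xx_n)$ stays in a bounded set of corners. The only difference is that for the tail control you sketch a direct shape-function argument, whereas the paper packages this step as Lemma~\ref{rootcontrol} and cites it from Proposition~3.1 of \cite{FeMaPi} (see also Lemma~3.4 of \cite{CaPi2}); your sketch is the right idea but the uniform-in-$(z,n)$ fluctuation control is exactly the content of that cited result.
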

	
	From now on we assume that the origin is a concave corner of $\phi$ and we parametrize the two sided random walk in terms of the microscopic concave corners of $\phi$. Let
	$$\dots<z_{-2}<z_{-1}<0=z_0<z_1<z_2<\dots$$ 
	denote the ordering of the concave corners of $\phi$. For $n\geq 1$ let 
	\begin{equation}\label{step}
	X^{a,+}_n:=S^{a,\phi}(z_n)-S^{a,\phi}(z_{n-1})\,\,\mbox{ and }\,\,X^{a,-}_n:=S^{a,\phi}(z_{-n})-S^{a,\phi}(z_{-n+1}) \,.
	\end{equation} 
	We set $S^{a,\pm}_0:=0$, $S^{a,\pm}_n:=\sum_{k=1}^nX^{a,\pm}_n$ for $n\geq 1$, and finally $M_a^{\pm}:=\max_{n\geq 0} S_n^{a,\pm}$. Notice that, by the definition of $X_{z}^{a,\phi}$,  concave corners are local maxima of $S^{a,\phi}$. In this way, $S^{a,\pm}$ is the sum of the increments between local maxima (it can be seen as a upper poligonal envelope of the original random walk). Thus,
	\begin{equation}\label{globalmax}
	M_a:=\max_{z\in\ZZ} S^{a,\phi}(z)=\max\left\{M_a^{+}\,,\,M_a^{-}\right\}\,,
	\end{equation}
	and, if we set $Z_a:=k\in\ZZ$ so that $Z_\phi(a)=z_k$, then 
	\begin{equation}\label{globalargmax}
	Z_a= \arg\max_{k\in\ZZ} S_k^{a,\pm}\,.
	\end{equation}
	By  independence between the sides of the random walk, for such a fixed substrate $\phi$,
	\begin{eqnarray}
		\nonumber\P_\phi \left(\Phi(a)=\0\right)&=&\P_\phi\left(Z_\phi(a)=0\right)\\
		\nonumber&=&\P_\phi\left(Z_a=0\right)\\
		\label{percolates}&=&\P_\phi\left(M^+_a=0\right)\P_\phi\left(M^-_a=0\right)\,.
	\end{eqnarray}
	
	The following corollary provides a positive answer to the concave wedge conjecture.
	\begin{coro}\label{Ben}
		Fix a substrate $\phi$ satisfying \eqref{slope}, and such that $\0$ is a microscopic concave corner. Then 
		$$\P_\phi\left(\#\cal{T}_\0=\infty \right)>0\,.$$
	\end{coro}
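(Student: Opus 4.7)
The plan is to fix a direction $a$ in the rarefaction interval $(\lambda_+^2,\lambda_-^2)$ (which is nonempty because $\lambda_+<\lambda_-$), use Theorem~\ref{rootlaw} to reduce the claim to positive-probability estimates for the auxiliary random walks $S^{a,\pm}$, and then verify those estimates via a standard ``first step very negative'' argument. By the definition of the asymptotic root, on the event $\{\Phi(a)=\0\}$ every sequence $\xx_n\in\Upsilon$ with $x_n(2)/x_n(1)\to a$ eventually satisfies $\Phi(\xx_n)=\0$, so in particular $\#\cal{T}_\0=\infty$. Combining this inclusion with \eqref{percolates} gives
$$\P_\phi(\#\cal{T}_\0=\infty)\;\geq\;\P_\phi(\Phi(a)=\0)\;=\;\P_\phi(M_a^+=0)\,\P_\phi(M_a^-=0),$$
so it suffices to show that both factors on the right are strictly positive.

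For the drift analysis, I would split the sum \eqref{rw} into its horizontal and vertical parts and apply the strong law of large numbers to the two independent i.i.d.\ families of exponentials. Combined with the asymptotic densities $1/(1+\lambda_+)$ of horizontal and $\lambda_+/(1+\lambda_+)$ of vertical edges supplied by \eqref{slope}, one obtains
$$\frac{S^{a,\phi}(z)}{z}\;\longrightarrow\;\frac{1+\sqrt a}{1+\lambda_+}\cdot\frac{\lambda_+-\sqrt a}{\sqrt a}\quad\text{a.s.,\ as }z\to+\infty,$$
which is strictly negative because $a>\lambda_+^2$; the analogous computation with $\lambda_-$ in place of $\lambda_+$ handles $z\to-\infty$ using $a<\lambda_-^2$. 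Since the concave corners are local maxima of $S^{a,\phi}$, it follows that $M_a^{\pm}$ coincide with the suprema of $S^{a,\phi}$ on the two half-lines and are therefore a.s.\ finite.

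To conclude, I isolate the first increment of $S^{a,+}$. Because $\0$ is a concave corner, the edge from $\phi_0$ to $\phi_1$ is $\ee_1$, so $X_1^{a,+}$ contains at least one independent $-\Exp'(1-\rho_a)$ summand; in particular $\P_\phi(X_1^{a,+}\leq -C)>0$ for every $C>0$. Moreover, the tail walk $n\mapsto X_2^{a,+}+\cdots+X_n^{a,+}$ is independent of $X_1^{a,+}$ and shares the same asymptotic drift, so its running maximum $N:=\sup_{n\geq 2}(X_2^{a,+}+\cdots+X_n^{a,+})$ is a.s.\ finite. On $\{X_1^{a,+}\leq -C\}\cap\{N\leq C\}$ the walk $S^{a,+}$ never exceeds $0$, so by independence
$$\P_\phi(M_a^+=0)\;\geq\;\P_\phi(X_1^{a,+}\leq -C)\,\P_\phi(N\leq C),$$
which is positive once $C$ is large enough that $\P_\phi(N\leq C)>0$. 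The same reasoning, using the $-\ee_2$ edge immediately preceding $\0$ to produce an unbounded negative contribution to $X_1^{a,-}$, gives $\P_\phi(M_a^-=0)>0$ and completes the proof. The only slightly delicate ingredient is the drift computation above, since $S^{a,\phi}$ has independent but not identically distributed increments; decomposing by edge orientation turns it into two i.i.d.\ exponential sums weighted by the asymptotic proportions from \eqref{slope} and bypasses the issue.
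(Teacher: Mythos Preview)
Your proof is correct and follows essentially the same route as the paper: reduce to $\P_\phi(M_a^{\pm}=0)>0$ via \eqref{percolates}, use the negative drift on each side to ensure the tail maximum $\bar M_a^{\pm}$ (your $N$) is a.s.\ finite, and then exploit that the first corner-to-corner increment $X_1^{a,\pm}$ has an unbounded negative part to force $M_a^{\pm}=0$ with positive probability. The only difference is that you supply an explicit drift computation for $S^{a,\phi}(z)/z$, whereas the paper simply asserts the negative drift for $a\in(\lambda_+^2,\lambda_-^2)$; your more careful phrasing (``contains at least one $-\Exp'(1-\rho_a)$ summand'') is in fact slightly more accurate than the paper's description of $X_1^{a,\pm}$ as a difference of two exponentials, which is only literally true for certain substrates.
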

	
	\begin{proof}
		Since $X_1^{a,\pm}$ is the difference between two independent exponential random variables, 
		$$\P_\phi\left(X_1^{a,\pm}<-y \right)>0\,\mbox{ for all }\,y>0\,.$$
		If we denote by $\bar{M}^{\pm}_a$ the maxima of the random walk with increments $\bar{X}_n^{a,\pm}:=X_{n+1}^{a,\pm}$, then 
		$$\P_\phi\left(\bar{M}^{\pm}_a<\infty\right)=1\,$$ for 
		for $a\in(\lambda_+^2,\lambda_-^2)$, and hence
		$$\P_\phi\left(M^{\pm}_a=0\right)\geq \P_\phi\left(X^{a,\pm}_1+\bar{M}^{\pm}_a<0\right)=\int_0^\infty\P_\phi\left(X_1^{a,\pm}<-y \right)\FF_{\bar{M}^{\pm}_a}(dy)>0\,.$$
		By \eqref{percolates},
		$$\P_\phi\left(\#\cal{T}_\0=\infty \right)\geq \P_\phi\left(\Phi(a)=\0\right)=\P_\phi\left(M^+_a=0\right)\P\left(M^-_a=0\right)>0\,,$$
		which finishes the proof.
		
	\end{proof}
	
	\subsubsection{Examples of computable models} Now we proceed with some explicit calculations for some types of substrates where the distribution of the maxima can be computed. Recall now the random variables $M_a$ and $Z_a$ given by \eqref{step}, \eqref{globalmax}, \eqref{globalargmax}, and define the variables 
	$ Z^+_a = \argmax_{ k\geq 0 } S_k^{a,+}$ , $Z^-_a = -\argmax_{k \leq 0}  S_k^{a,-}$. 
	 Assume for the moment that there exists $c>0$  such that $\E (e^{ u \, X_1^{a,\pm}} )<\infty$ for all $u \in [0,c]$, and that there exists a (minimal) $\gamma^\pm >0$ such that $\E (e^{\gamma \, X_1^{a,\pm}} ) =1$. Also, assume that $X_1^{a, \pm}$ has an exponential right tail: there exist constants $b^\pm, \delta^\pm >0$ such that its density at the right of the origin has the form
	 \begin{equation}\label{exptail}
	 	f_{X_1^{a, \pm}}(x) = b^\pm \, \delta^\pm \, e^{- \delta^\pm \, x} \quad \forall x > 0, 
	 \end{equation}
	 where $b^\pm= \P(X_1^{a,\pm}>0)$. By relating the random walk with waiting times in queueing theory, it is known that \cite{Res}:
	 \begin{equation}\label{maxdist}
	 \P (M^\pm_a =0)=\frac{\gamma^\pm}{\delta^\pm}\,\,\,\mbox{ and }\,\,\,\,\P (M^\pm_a > x)= \Big(1- \frac{\gamma^\pm}{\delta^\pm} \Big) e^{- \gamma^\pm \, x} \,, \,\,\forall x \geq 0\,.
	 \end{equation}
	 \newline
		
	\noindent\paragraph{\bf Bernoulli Substrate}
	Fix $p_-\in(0,1]$ and $p_+\in[0,1)$, with $p_->p_+$. Consider a random substrate where $\phi_{-1}=\ee_2$, $\phi_0=\0$ and $\phi_1=\ee_1$. For $z>1$
	$$\P\left(\phi_{z+1} - \phi_{z}=-\ee_2\right)=p_+=1-\P\left(\phi_{z+1} - \phi_{z}=\ee_1\right)\,,$$
	while for $z<1$, 
	$$\P\left(\phi_{z-1} - \phi_{z}=\ee_2\right)=p_-=1-\P\left(\phi_{z-1} - \phi_{z}=-\ee_1\right)\,.$$
	Thus, a.s.
	$$\lambda_+=\frac{p_+}{1-p_+}\,\,\,\mbox{ and }\,\,\,\lambda_-=\frac{p_-}{1-p_-}\,.$$
	Also, for $n\geq1$,
	\begin{eqnarray}\label{stepber}
	X^{+,a}_n &\stackrel{dist.}{=}&\Exp_n((1-p_+)\rho_a)-\Exp_n(p_+(1-\rho_a))\,, \\
	\textrm{ and } \qquad \qquad \qquad \qquad \qquad \quad &  &   \nonumber \\
	X^{-,a}_n   &\stackrel{dist.}{=} &\Exp_n(p_-(1-\rho_a))-\Exp_n((1-p_-)\rho_a)\,.  \nonumber
	\end{eqnarray}
	where the last distribution equalities are with respect to the joint law $\P$. To see this, notice that the number of down steps between two right steps is distributed as the number of trials until the first success (right step) of a Bernoulli random variable of parameter $(1-p_+)$. Thus, along down steps we have a geometrical sum of exponentials of parameter $\rho_a$, which gives an exponential of parameter $(1-p_+)\rho_a$. For the other cases the argument is analog. Here condition \eqref{exptail} is met, and the parameters are known for the associated one-sided storage system (it is an M/M/1 queue \cite{Res}): $\delta_a^+=(1-p_+) \rho_a $, $\delta_a^-= p_-(1-\rho_a)$, $\gamma_a^+=(1-p_+) \rho_a - p_+ (1-\rho_a)$ and $\gamma_a^-=p_-(1-\rho_a)-(1-p_-) \rho_a$. As a consequence of \eqref{maxdist},  
	$$\P\left(M^+_a=0\right)=1-\frac{p_+(1-\rho_a)}{(1-p_+)\rho_a}=1-\frac{\lambda_+}{\sqrt{a}}\,,$$
	and 
	$$\P\left(M^-_a=0\right)=1-\frac{(1-p_-)\rho_a}{p_-(1-\rho_a)}=1-\frac{\sqrt{a}}{\lambda_-}\,.$$
	Therefore, 
	$$\P\left(\Phi(a)=\0\right)=\left(1-\frac{\lambda_+}{\sqrt{a}}\right)\left(1-\frac{\sqrt{a}}{\lambda_-}\right)\,,\,\,\mbox{ for }\,\,a\in(\lambda^2_+,\lambda^2_-)\,.$$
	By maximising the last expression we find $a=\lambda_+\lambda_-$, and thus 
	$$\P\left(\#\cal{T}_\0=\infty \right)\geq\left(1-\frac{\sqrt{\lambda_+}}{\sqrt{\lambda_-}}\right)^2\,.$$
	For this type of substrate (and the following one) it is possible to obtain more explicit expression for the joint law of the maxima and its location (see \eqref{jointexpl1} below), since the density of a difference of two independent random gamma variables is known \cite{Kl}, however it turns out to be a complex formula. 
	\newline
	
	\noindent\paragraph{\bf Periodic Substrate} 
	Let $k_+,k_-\geq 1$ such that $\max\{k_+,k_1\}\geq 2$. Define $\phi$ by starting at $\phi_0=\0$ and then, for $z>0$, jumping $k_+$ steps to the right and $1$ down, repeatedly, while for $z<0$, jumping $k_-$ steps up and $1$ to the left, repeatedly. For this substrate we have $\lambda_+=k_+^{-1}$ and $\lambda_-=k_-$. The increments are given by 
	\begin{eqnarray}\label{stepper}
	X^{+,a}_n &\stackrel{dist.}{=}& \Exp(\rho_a)-\sum_{j=1}^{k^+}\Exp_j(1-\rho_a)\,,\,\,\mbox{ for }\,\,n\geq 1\,, \\
	\textrm{ and } \qquad \qquad &  &   \nonumber \\
	 X^{-,a}_n &\stackrel{dist.}{=}&  \Exp(1-\rho_a)-\sum_{j=1}^{k^-}\Exp_j(\rho_a)\,,\,\,\mbox{ for }\,\,n\leq 1\,. \nonumber
	\end{eqnarray}
	In this case, the exponential right tail assumption \eqref{exptail} is fulfilled, and we can use formula \eqref{maxdist} to compute the probability of the maxima be zero. This is related to a G/M/1 queueing system \cite{Res} and it boils down to calculate, for each $a\in(k_+^{-2},k_-^2)$, the smallest positive solution of    
	$$\alpha(1-\rho_a\alpha)^{k_+}=(1-\rho_a)^{k_+}\,\,\mbox{ and }\,\,\alpha(1-(1-\rho_a)\alpha)^{k_-}=\rho_a^{k_-}\,,$$
    which we denote by $\alpha^+_a,\alpha^-_a\in(0,1)$, respectively. Thus,
	$$\P_\phi\left(M^+_a=0\right)=1-\alpha^+_a\,\,\mbox{ and }\,\,\P_\phi\left(M^-_a=0\right)=1-\alpha^-_a\,,$$
	and  
	$$\P_\phi\left(\Phi(a)=\0\right)=\left(1-\alpha^+_a\right)\left(1-\alpha^-_a\right)\,,\,\,\mbox{ for }\,\,a\in(k_+^{-2},k^2_-)\,.$$
	
	 We were not able to find a closed formula for other periodic substrates because there were no results on the distribution of the maxima when the step distribution of the underlying random walk is different from exponential minus gamma.      
	\newline
		
	\noindent\paragraph{\bf Finite Rooted Substrate} 
	To compute the value of the probability that the tree at the origin percolates using \eqref{substrate} one needs to have more information on the joint probability of $S^{a,\phi}$, as a process in $a\in(\lambda_+^2,\lambda_-^2)$. This problem is related to the computation of the joint distribution of the Busemann field for different values of directions (see Section \ref{Bus}), which is still not accomplished. However, there is a particular example of substrate where the probability of percolation can be computed by means of another method, developed by Coupier \cite{Co1}, that is based on the relation between completion interfaces and second-class particles \cite{FePi}, and on the distributional description of the totally asymmetric simple exclusion speed process introduced by Amir, Omer and V\'alko \cite{AOV}. Consider a substrate $\phi$ as follows: set $\phi_0=\0$, $\phi_{-1}=\ee_2$, and for $z<-1$ set $\phi_{z}=|z+1|\ee_2-\ee_1$. Fix $m\in\NN$ and for $z\in\{1,\cdots,m\}$ we set $\phi_z=z\ee_1$ while for $z>m$ we set $\phi_z=z\ee_1-\ee_2$. In this way, we get three concave corners and it might happen that the tree at the origin is finite. The probability that this happens is 
	\begin{equation}\label{finiteroots}
		\P_\phi\left(\#\cal{T}_\0<\infty\right)=\frac{2}{m+2}\,.
	\end{equation} 
	Formula \eqref{finiteroots} is a consequence of equation (22) in \cite{Co1}, as soon as one realizes that $\#\cal{T}_\0=\infty$ means coexistence of the trees. On the other hand, the probability of coexistence was obtained in \cite{Co1} using the results in \cite{AOV,FePi}. To compute the probability of percolation in the direction $(1,a)$, by our method, we only need to compare independent gamma random variables,
	\begin{eqnarray*}
		\P_\phi\left(\Phi(a)=0\right)&=&\P_\phi\left(M^+_a=0\right)\P_\phi\left(M^-_a=0\right)\\
		&=&\P \left(\Gamma(\rho_a,1)<\Gamma(1-\rho_a,m)\right)\P\left(\Gamma(1-\rho_a,1)<\Gamma(\rho_a,1)\right)\\
		&=&\frac{\sqrt{a}}{1+\sqrt{a}}\sum_{j=0}^{m-1}\frac{(j+m-1)!}{j!}\left(\frac{1}{1+\sqrt{a}}\right)^{j+1}\,.   
	\end{eqnarray*}
	
	 \subsubsection{The joint transform of the maximum and its location}
    Not easy to apply formulas are available for the joint distribution of the global maximum of a random walk and its location, however next proposition suggest some Monte Carlo method to approximate it, and thus to approximate the law of $(Z,M)$ for the Bernoulli and periodic substrates.
    
    \begin{prop}\label{ZrelN}
    Let $S_n = \sum_{i=1}^n X_i$, $S_0=0$ denote a random walk defined on the non-negative integers, such that $\mathbb{E} (X_i) <0$, where $X_i$ has a continuous distribution. Define $M:= \max_{n \geq 0} S_n$, $Z=\argmax_{n \geq 0} S_n$, and the hitting time of $\{S_n\}_{n \geq 0}$ to the negative numbers as
     $$N:= \min\{ n \geq 1 | \sum_{i=1}^{n} X_i \leq 0 \}. $$
     Then $\P( Z=0, M=0) =\P(M=0)$ and we have the equality of densities
     $$f_{Z,M}(n,x)= \sum_{j>n}f_{N,S_n} (j,x)  \P(M=0) \qquad \forall n \geq 1, x \geq 0. $$
    \end{prop}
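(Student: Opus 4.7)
The plan is to decompose the event $\{Z = n, M = x\}$ using the strong Markov property at time $n$, which splits the walk into an independent past (length $n$) and future (starting fresh from $0$), and then apply time-reversal to recognize the past contribution as the joint density of $(N, S_n)$.

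First I would dispose of the case $n=0$. Since $S_0=0$ and the $X_i$ have continuous distributions, $\P(S_k = 0)=0$ for every $k\geq 1$, so on $\{M=0\}$ we have $S_k<0$ for all $k\geq 1$ almost surely. Hence the argmax is $0$, giving $\P(Z=0,M=0)=\P(M=0)$.

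Next, for $n\geq 1$ and $x>0$, I would write
\[
\{Z=n, M\in dx\}=\{S_n\in dx\}\cap\{S_k<x\text{ for }0\leq k\leq n-1\}\cap\{S_{n+k}-S_n<0\text{ for all }k\geq 1\}.
\]
Conditioning on $\mathcal{F}_n$ and using that $(S_{n+k}-S_n)_{k\geq 0}$ is an independent copy of $(S_k)_{k\geq 0}$, the last event is independent of the first two and equal to $\{\widetilde M=0\}$ where $\widetilde M$ is the supremum of the independent copy. Since $\widetilde M\geq 0$ always and equals $0$ iff $S_{n+k}-S_n<0$ for all $k\geq 1$ (again by continuity of the $X_i$), its probability is exactly $\P(M=0)$. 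This isolates the factor $\P(M=0)$ in the claimed formula.

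It remains to express $\P(S_n\in dx,\ S_k<x\text{ for }0\leq k\leq n-1)$ as $\sum_{j>n}f_{N,S_n}(j,x)\,dx$. The key tool is reversal: set $\widehat X_k=X_{n-k+1}$ for $k=1,\ldots,n$ and $\widehat S_k=\sum_{i=1}^k\widehat X_i$, so that $\widehat S_k=S_n-S_{n-k}$. The i.i.d.\ property makes $(\widehat S_1,\ldots,\widehat S_n)\eqdi(S_1,\ldots,S_n)$, and a change of index $j=n-k$ rewrites $\{S_k<x,\ 0\leq k\leq n-1\}\cap\{S_n=x\}$ as $\{\widehat S_j>0,\ 1\leq j\leq n\}\cap\{\widehat S_n=x\}$. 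In law this is $\{S_j>0,\ 1\leq j\leq n,\ S_n\in dx\}=\{N>n,\ S_n\in dx\}$, whose density in $x$ is exactly $\sum_{j>n}f_{N,S_n}(j,x)$. Multiplying by the factor $\P(M=0)$ extracted above completes the identification.

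The only real subtlety is bookkeeping around the strict-inequality event $\{S_k<x\}$: continuity of the $X_i$ is used twice, once to turn $M=0$ into a strict inequality for the post-$n$ increments and once to justify treating $\{S_j>0\}$ and $\{S_j\geq 0\}$ interchangeably when matching the reversed event with the definition of $N$. Once this is handled the rest is bookkeeping between distributions and (sub)densities.
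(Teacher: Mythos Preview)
Your proof is correct. Both your argument and the paper's rest on the same two ingredients---splitting the walk at the time of the maximum and using time reversal on the pre-maximum segment---but the packaging is different. You apply these directly: independence of the post-$n$ increments peels off the factor $\P(M=0)$, and the reversal $\widehat X_k=X_{n-k+1}$ turns the event ``$S_n$ is a strict record at time $n$'' into $\{N>n,\ S_n\in dx\}$. The paper instead introduces the auxiliary stationary process $W_{-n}=\max_{k\geq n}(S_k-S_n)$, identifies $(Z,M)=(\tau,W_0)$ where $\tau=\min\{n\geq 0:W_{-n}=0\}$, and then obtains the factor $\P(M=0)$ from stationarity of $W$ (so that $\P(W_{-n}=0)=\P(W_0=0)$) together with the Lindley recursion $W_{-n}=(W_{-n-1}+X_{n+1})^*$ to unfold the conditional law given $W_{-n}=0$. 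Your route is shorter and entirely self-contained; the paper's route is a bit more indirect but makes explicit the queueing interpretation (Lindley waiting times) that is used again in Proposition~\ref{jointexpl} and in the computable examples via \eqref{maxdist}.
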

       
     Let us briefly describe the method.  Assume the probability of $M^\pm_a$ be equal to zero is known. Then, to approximate the right hand side of the result in Proposition \ref{ZrelN} one could simulate random walks where the hitting time to the negative real has yet not happened (using acceptance-rejection method) and this would be enough to approximate the joint distribution of $(Z^\pm_a,M^\pm_a)$. Then, by using this approximation, it is possible to simulate $(Z^+_a,M^+_a) $ and $(Z^-_a,M^-_a)$ independently. Finally, $(Z_a,M_a)$ is a deterministic function of those variables, thus one could use crude Monte Carlo to approximate its distribution. 

Using the previous proposition, and the same connection between $(M^\pm_a, Z^\pm_a)$ and queueing systems, we obtained an expression for the joint transform of the maximum and its location. 
	\begin{prop}\label{jointexpl}
	Under the assumptions on the step distribution stated above, the joint transform of $(Z_a,M_a)$ can be expressed as
	\begin{eqnarray}
	\nonumber\E(s^{Z_a} \, e^{uM_a})  &=& \Big( \frac{\gamma^+}{\delta^+} \Big) \Big( \frac{\gamma^-}{\delta^-} \Big) +\phi^+_a(s,u) 
	- \Big( 1 - \frac{\gamma^+}{\delta^+} \Big) \phi^+_a(s,u - \gamma^+) \\
	\label{jointexpl1}&+&  \phi^-_a(s,u) - \Big( 1 - \frac{\gamma^-}{\delta^-} \Big)  \phi^-_a(s,u- \gamma^-), \quad \forall s \in (0,1), \, u \in [0,c] \,,
	\end{eqnarray}
	where $\gamma^\pm, \delta^\pm$ are the corresponding constants for each one-sided random walk and the function $ \phi^\pm_a$ is given by
	\begin{equation*}
		\phi^\pm_a(s,u) = \frac{\gamma^\pm}{\delta^\pm}  \exp \Big\{  \sum_{n=1}^{ \infty} \frac{s^n}{n}  \Big( \E( e^{-u (S_n^{a,\pm})^*}) - \P( S_n^{a,\pm} \leq 0 ) \Big)  \Big\} ,
	\end{equation*}
	and we are using the (non-standard) notation $x^*= \max \{0,x\}$, for $x$ real.
\end{prop}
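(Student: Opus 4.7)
The plan is to recognise $\phi^\pm_a(s,u)$ as the joint transform $\E(s^{Z^\pm_a}e^{uM^\pm_a})$ of each one-sided walk $S^{a,\pm}$, and then reduce the two-sided transform to the one-sided ones by conditioning on which side realises the global maximum and exploiting the independence of the two halves of $S^{a,\phi}$.

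Step 1 — identifying $\phi^\pm_a$. Apply Proposition \ref{ZrelN} to each one-sided walk, writing $N^\pm$ for the hitting time of $(-\infty,0]$ by $S^{a,\pm}$. Summing the density identity $f_{Z^\pm_a,M^\pm_a}(n,x)=\sum_{j>n}f_{N^\pm,S^{a,\pm}_n}(j,x)\,\P(M^\pm_a=0)$ over $j$ collapses the inner sum to the joint law of $(S^{a,\pm}_n,\1_{N^\pm>n})$; note that on $\{N^\pm>n\}$ one has $S^{a,\pm}_n>0$, so integrating against $e^{uS^{a,\pm}_n}$ and summing in $n$ gives
\begin{equation*}
\E(s^{Z^\pm_a}e^{uM^\pm_a})=\P(M^\pm_a=0)\sum_{n\geq 0}s^n\,\E\bigl(e^{uS^{a,\pm}_n}\1_{N^\pm>n}\bigr).
\end{equation*}
The Baxter--Spitzer factorisation identity rewrites the right-hand series as $\exp\bigl\{\sum_{n\geq 1}\tfrac{s^n}{n}\,\E(e^{uS^{a,\pm}_n}\1_{S^{a,\pm}_n>0})\bigr\}$, and the elementary decomposition $\E(e^{uS_n}\1_{S_n>0})=\E(e^{u S_n^*})-\P(S_n\leq 0)$ combined with the queueing identity $\P(M^\pm_a=0)=\gamma^\pm/\delta^\pm$ from \eqref{maxdist} matches exactly the displayed definition of $\phi^\pm_a$.

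Step 2 — two-sided decomposition. The tail assumption \eqref{exptail} ensures that each $M^\pm_a$ has a continuous density on $(0,\infty)$ and an atom $\gamma^\pm/\delta^\pm$ at $0$, so the tie event $\{M^+_a=M^-_a>0\}$ is null. Splitting according to the winning side,
\begin{equation*}
\E(s^{Z_a}e^{uM_a})=\P(M^+_a=M^-_a=0)+\E\bigl(s^{Z^+_a}e^{uM^+_a}\1_{M^+_a>M^-_a}\bigr)+\E\bigl(s^{Z^-_a}e^{uM^-_a}\1_{M^-_a>M^+_a}\bigr),
\end{equation*}
and by independence of the two sides the first term equals $(\gamma^+/\delta^+)(\gamma^-/\delta^-)$. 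For each cross term I condition on the winning pair $(Z^\pm_a,M^\pm_a)$ and insert the c.d.f.\ $\P(M^\mp_a\leq x)=1-(1-\gamma^\mp/\delta^\mp)e^{-\gamma^\mp x}$ from \eqref{maxdist}; after isolating the atomic contribution from $\{M^\pm_a=0\}$ (on which $Z^\pm_a=0$ forces the summand to equal $\gamma^\pm/\delta^\pm$) from the absolutely continuous piece, the cross term is a linear combination of $\phi^\pm_a(s,u)$ and $\phi^\pm_a(s,u-\gamma^\mp)$ with the stated coefficients, and straightforward algebra assembles the final formula.

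Step 3 — main obstacle. The only non-elementary input is the Baxter--Spitzer identity invoked in Step~1; the rest of the argument is careful bookkeeping of the atom of $M^\pm_a$ at zero, making sure the contribution $(\gamma^+/\delta^+)(\gamma^-/\delta^-)$ is accounted for exactly once when one passes from the raw c.d.f.\ expansion to the cleaner combination of $\phi^\pm_a$'s at shifted arguments.
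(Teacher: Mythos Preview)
Your argument follows exactly the paper's route: apply Proposition~\ref{ZrelN} to each half, invoke the Spitzer--Baxter identity (the paper cites Iglehart~\cite{Ig} for the same formula) to identify $\phi^\pm_a$ with the one-sided joint transform, and then assemble the two-sided transform by conditioning on which side carries the global maximum and inserting the explicit law~\eqref{maxdist} of the losing $M^\mp_a$. The only cosmetic difference is that the paper isolates the atom $\{Z^\pm_a=0\}$ before summing (its $\phi_{Z^\pm,M^\pm}$ runs over $n\ge 1$) whereas you keep it inside the $n\ge 0$ sum; the resulting bookkeeping is identical.
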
 
  To simplify the expression  \eqref{jointexpl1}, it would be necessary to compute $\delta^\pm, \gamma^\pm$, $\E(e^{-u(S_n^{a,\pm})^*}) $ and $\P (S_n^{a,\pm} \leq 0)$ in each particular case.\\

	\subsubsection{On the number of tress that percolates}
	We note that, since geodesics cannot cross (though they may coalesce) the location of the root is monotonic with respect to the slope $a$. Thus, if $c\in[a,b]\subseteq(\lambda^2_{+},\lambda^2_{-})$ then $Z_\phi(b)\leq Z_\phi(c)\leq Z_\phi(a)$. In particular, there are only finitely many roots such that the respective tree percolates within $[a,b]$.  However, as soon as one get closer to the critical slopes $\lambda_{\pm}^2$, the number of infinite trees may explode. To see an example where this occurs, take a Bernoulli type random substrate $\phi$ as before with parameters $p_-$ and $p_+$:
	$$\E X_{n}^{a,+} = \frac{1}{(1-p_+)\rho_a} - \frac{1}{p_+(1-\rho_a)}\,\,\,\mbox{ and }\,\,\,\E X_{n}^{a,-} = \frac{1}{p_-(1-\rho_a)} - \frac{1}{(1-p_-)\rho_a}\,.$$
	At the critical slopes, $\E X_{n}^{ \lambda_{\pm}^2,\pm} = 0$ and $M_{ \lambda_{\pm}^2}^\pm=\infty$. Since $Z_\phi(a)$ is a monotonic function of $a$, we must have that $Z_\phi(a) \to \pm\infty$, as $a \to \lambda^2_\pm$. Thus, we have the following corollary \footnote{A similar reasoning can be done for a deterministic concave substrate that exhibits a periodic structure on each side, like the one with parameters $k_-,k_+$, and the analog result will hold as well.}.
	
	\begin{coro}\label{infinite}
		Consider the geodesic forest composed by a Bernoulli type random substrate $\phi$ with parameters $p_-$ and $p_+$, with $p_->p_+$. Then 
		$$\lim_{a\to \lambda^2_\pm}Z_\phi(a)\stackrel{a.s.}{=}\pm\infty\,.$$
		In particular, a.s., there will be infinitely many roots that percolates.
	\end{coro}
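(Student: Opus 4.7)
The plan is to combine the pathwise monotonicity of $a\mapsto Z_\phi(a)$ with the distributional divergence produced by the explicit Bernoulli formulas above. Since geodesics cannot cross, on every realization the map $a\mapsto Z_\phi(a)$ is non-increasing on $(\lambda_+^2,\lambda_-^2)$, so the monotone limits
$$L^+(\omega):=\lim_{a\searrow\lambda_+^2}Z_\phi(a)\in\ZZ\cup\{+\infty\},\qquad L^-(\omega):=\lim_{a\nearrow\lambda_-^2}Z_\phi(a)\in\ZZ\cup\{-\infty\}$$
exist a.s. Because $Z_\phi(\cdot,\omega)$ takes integer values, if $L^+(\omega)=L<\infty$ then the function must stabilise at $L$ on some right neighbourhood of $\lambda_+^2$; hence $\{L^+=L\}\subseteq\{Z_\phi(a)\leq L\}$ for every $a\in(\lambda_+^2,\lambda_-^2)$ and every finite $L\in\ZZ$.

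Next I would show $Z_\phi(a)\to+\infty$ in probability as $a\searrow\lambda_+^2$. By Theorem~\ref{rootlaw} together with \eqref{globalargmax}, this reduces to the divergence in probability of the argmax $Z_a^+$ of the one-sided walk $S^{a,+}$: the negative-side walk retains a strictly negative drift near $\lambda_+^2$, so $M_a^-$ remains tight while $M_a^+$ blows up, forcing $\P_\phi(Z_a=Z_a^+)\to 1$. For the Bernoulli substrate the explicit tail $\P(M_a^+>x)=(\lambda_+/\sqrt{a})\,e^{-\gamma_a^+x}$, combined with $\gamma_a^+=(1-p_+)\rho_a-p_+(1-\rho_a)\to 0$ and $\lambda_+/\sqrt{a}\to 1$, gives $M_a^+\to\infty$ in probability. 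For any $K\geq 0$ and $x>0$,
$$\P\bigl(Z_a^+\leq K\bigr)\leq \P\Bigl(\max_{0\leq n\leq K}S_n^{a,+}\geq x\Bigr)+\P\bigl(M_a^+<x\bigr),$$
and the first term is uniformly small in $a$ near $\lambda_+^2$ for $x$ large (it only involves $K$ increments whose laws depend continuously on $a$), while the second vanishes for each fixed $x$; sending $a\to\lambda_+^2$ first and then $x\to\infty$ yields $Z_a^+\to+\infty$, hence $Z_\phi(a)\to+\infty$, in probability.

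Combining the two ingredients, if $\P(L^+<\infty)>0$ there exists $L\in\ZZ$ with $\P(L^+=L)>0$, and the inclusion from the first step gives $\P(Z_\phi(a)\leq L)\geq \P(L^+=L)>0$ uniformly in $a$, contradicting convergence in probability to $+\infty$. Hence $L^+=+\infty$ a.s.; the symmetric argument, now using $\gamma_a^-\to 0$ and $\P(M_a^->x)=(\sqrt{a}/\lambda_-)\,e^{-\gamma_a^-x}$ as $a\nearrow\lambda_-^2$, gives $L^-=-\infty$ a.s. The ``in particular'' assertion then follows: a monotone integer-valued function on $(\lambda_+^2,\lambda_-^2)$ with one-sided limits $\pm\infty$ at the endpoints takes infinitely many distinct values, and by the definition of asymptotic root each such value is the $\phi$-index of a concave corner whose geodesic tree contains an infinite sequence, hence is infinite. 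The main technical point is the distributional divergence $Z_a^+\to\infty$; it is the standard heavy-traffic fact that the argmax of a random walk with vanishing negative drift escapes to infinity, handled here cleanly by the closed-form Bernoulli tail.
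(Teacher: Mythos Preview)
Your proof is correct and follows the same strategy as the paper's argument (given in the paragraph immediately preceding the corollary): pathwise monotonicity of $a\mapsto Z_\phi(a)$ combined with divergence of the random-walk maxima at the critical slopes. Your version is in fact more careful than the paper's sketch---the paper simply notes that $\E X_n^{\lambda_\pm^2,\pm}=0$ forces $M_{\lambda_\pm^2}^\pm=\infty$ and invokes monotonicity, whereas you make explicit the passage from the distributional identity of Theorem~\ref{rootlaw} (which holds only for each fixed $a$) to the almost-sure statement, via convergence in probability of $Z_a^+$ and a contradiction with the existence of a finite monotone limit.
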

	
	In view of Corollary \ref{infinite} one might expect that $Z_\phi(a)$ converges to infinity according to  some speed that depends on $a$. We expect that Proposition \ref{jointexpl} may be helpful to attack this problem for computable models (Bernoulli or Periodic initial substrates). Another approach would be to fix $a=1$, set $p_-=1/2+\epsilon$, $p_+=1/2-\epsilon$ and then send $\epsilon\to 0^+$.

	\subsubsection{Last-passage percolation with weighted substrate}
	An alternative description of the model can be done by fixing the initial substrate as the horizontal axis and then putting extra weights along it. Now, the geometry of the substrate is represented by a collection of non-negative real numbers $\{\nu_k:k\in\ZZ\setminus\{0\}\}$.  Define
	\begin{equation*}
	\nu(k)=\left\{\begin{array}{ll} 0 & \mbox{ if }\,\,\,k=0 \,\\
	\sum_{i=1}^k\nu_i & \mbox{ if }\,\,\,k>0 \,\\
	-\sum_{i=k}^{-1}\nu_i & \mbox{ if }\,\,\,k<0\,\end{array}\right.
	\end{equation*}
	We will assume that this collection has an asymptotic drift: there exists $\mu_->\mu_+> 1$ such that 
	$$\lim_{k\to-\infty}\frac{\nu(k)}{k}=-\mu_-\,\,\mbox{ and }\,\,\lim_{k\to\infty}\frac{\nu(k)}{k}=\mu_+\,.$$ 
	The last-passage percolation time with weighted substrate $\nu$ is defined for $x\in\ZZ$ and $n\geq 1$ as 
	\begin{equation}\label{lppsystem}
	L_\nu(x,n):=\max_{k\leq x}\left\{\nu(k)+L_k(x,n)\right\}\,
	\end{equation}
	where $L_k(x,n):=L\left((k,1),(x,n)\right)$. The point to line geodesic is now defined as $\gamma_\nu(x,n):=\gamma\left(K_\nu+\ee_2,(x,n)\right)$ where 
	$$K_\nu(x,n):=\arg\max_{k\leq x}\left\{\nu(k)+L_k(x,n)\right\}\,.$$
	
	Let
	$$\F_{\nu}:=\left\{\gamma_\nu(x,n)\,:\,x\in\ZZ\,,\,n\geq 1\right\}\,.$$
	Then $\F_\nu$ is a union of trees rooted at maximisers:
	\begin{equation}\label{wforest}
	\F_\nu=\cup_{k\in\ZZ}\cal{T}_k\,,
	\end{equation}
	where
	$$\cal{T}_k:=\left\{\gamma_\nu(x,n)\,:\,K_\nu(x,n)=k\right\}\,.$$
	We call $\F_\nu$ the geodesic forest with weighted substrate $\nu$. As before, we also say that a slope 
	$a>0$ has root $k$ if for every sequence of lattice points $(\xx_n)_{n\geq 1}$ in $\ZZ\times\NN$ with direction $(1,a)$, there exists $n_0$ such that $K_\nu(\xx_n)=k$ for all $n\geq n_0$. In that case, we denote $K_\nu(a):=k$ (the root of the direction $(1,a)$).
	
	The weighted substrate may be deterministic or random. We always assume that it is independent of the lattice weights $W_\xx$. As an example, take collections $\{\Exp^\nu_k(1-p_+):k>0\}$ and $\{\Exp^\nu_k(1-p_-):k<0\}$ of i.i.d. exponential random variables of intensity $1-p_+$ and $1-p_-$, respectively, where $p_+,p_-\in(0,1)$. In this case
	$$\mu_-=\frac{1}{1-p_-}\,\,\mbox{ and }\,\,\mu_+=\frac{1}{1-p_+}\,.$$
	The assumption $\mu_->\mu_+$ (or $p_->p_+$) corresponds to the rarefaction regime where the characteristic slopes satisfy 
	$$ (\mu_+-1)^2=\left(\frac{p_+}{1-p_+}\right)^2<\left(\frac{p_-}{1-p_-}\right)^2=(\mu_--1)^2\,.$$
	
	Similar to the preceding case, let $\{\Exp_z(1-\rho):z\in\ZZ\}$ be a collection of i.i.d. exponential random variables of intensity $1-\rho$. This collection is also assumed to be independent of $\nu$, whenever $\nu$ is random. Define 
	\begin{equation*}
	\mu_a(k)= \left\{ \begin{array}{ll} 0 & \mbox{ if }\,\,\,k=0 \,\\
	\sum_{i=1}^k\Exp_i(1-\rho_a) & \mbox{ if }\,\,\,k>0 \,\\
	-\sum_{i=k}^{-1}\Exp_i(1-\rho_a) & \mbox{ if }\,\,\,k<0\, \textrm{,} \end{array} \right. 
	\end{equation*}
	where we kept $\rho_a = \frac{\sqrt{a}}{1+\sqrt{a}}$. Fix $a\in((\mu_+-1)^2,(\mu_--1)^2)$. Then a.s. it has an asymptotic root $K_\nu(a)$. Furthermore,
	\begin{equation}\label{weighted}
	K_\nu(a)\stackrel{dist.}{=}\arg\max_{k\in\ZZ}\left\{ \nu(k)-\mu_{a}(k)\right\}\,.
	\end{equation}
	 The proof of \eqref{weighted} follows the same method developed to prove \eqref{substrate}. For the sake of brevity, we will not include it in this article.
	\newline
	
	\noindent\paragraph{\bf Exponential Weighted Substrate}
	For the weighted substrate with exponential distribution with parameters $(1-p_-)$ and $(1-p_+)$ the calculation using maxima of random walks is analog, and we get that 
	$$\P\left(\Phi(a)=\0\right)=\left(1-\frac{\mu_+}{1+\sqrt{a}}\right)\left(1-\frac{1+\sqrt{a}}{\mu_-}\right)\,,\,\,\mbox{ for }\,\,a\in((\mu_+-1)^2,(\mu_--1)^2)\,.$$
	By maximising over  $a\in((\mu_+-1)^2,(\mu_--1)^2)$, we find $(1+\sqrt{a})^{-1}=\sqrt{\mu_+\mu_-}$, and hence
	$$\P\left(\#\cal{T}_\0=\infty \right)\geq\left(1-\frac{\sqrt{\mu_+}}{\sqrt{\mu_-}}\right)^2\,.$$
	For this model we also have that a.s. $K_\nu(a)\to\pm\infty$, as  $a \to (\mu_\pm-1)^2$.
	\newline

	\subsection{Convergence of the geodesic forest with flat substrate}
	Finite geodesics do converge when we fix one end point and send the other to infinity along a prescribed direction. For simplicity we will choose the direction $(-1,-1)$. The following is a well known result in last-passage percolation with exponential weights \cite{Co,FePi}: a.s for each $\xx\in\ZZ^2$ there is a unique semi-infinite geodesic $\gamma(\xx)$ (down-left oriented) such that if a sequence of lattice points $\xx_n=(x_n(1),x_n(2))\leq \xx$, $n\geq 1$, satisfies
	$$\lim_{n\to\infty}(x_n(1),x_n(2))=-\infty\,\,\mbox{ and }\,\,\lim_{n\to\infty}\frac{x_n(2)}{x_n(1)}= 1\,,$$
	then 
	\begin{equation}\label{semigeo}
	\lim_{n\to\infty}\gamma(\xx_n,\xx)=\gamma(\xx)\,.
	\end{equation}
	Furthermore, for every $\xx,\yy\in\ZZ^2$ there is $\cc$ such that (coalescence occurs)
	\begin{equation}\label{coal}
	\gamma(\xx)=\gamma(\cc) \concat \gamma(\cc,\xx)\,\,\mbox{ and }\,\,\gamma(\yy)=\gamma(\cc)\concat\gamma(\cc,\yy)\,,
	\end{equation}
	where $\concat$ denotes the concatenation of paths. As a consequence, the collection of such semi-infinite geodesics
	$$\cal{F}:=\left\{\gamma(\xx)\,:\,\xx\in\ZZ^2\right\}\,,$$
	is a.s. a tree. Let $\nu$ be the exponential weighted substrate of parameter $1/2$. It follows from Theorem 5.3 of \cite{CaPi} that 
	$$\cal{F}^+\stackrel{dist.}{=}\cal{T}_\nu\,,$$
	where $\cal{F}^+:=\cal{F}\cap \NN\times\ZZ$ and $\cal{T}_\nu$ is given by ($\ref{wforest}$).
	
	Furthermore, the geodesic tree $\cal{F}$ is the limiting tree of a  geodesic forest with respect to a flat substrate. Indeed, assume that $\phi^n=(\phi_z^n)_{z\in\ZZ}$ has inclination
	$$\lim_{z\to-\infty}\frac{\phi^n_z(2)}{\phi^n_z(1)}=\lim_{z\to+\infty}\frac{\phi^n_z(2)}{\phi^n_z(1)}= -1\,.$$
	The index $n$ means that now the origin of the substrate is $\phi^n_0=-n\dd$, we will let $n\to\infty$. Denote $\cal{F}_{\phi^n}$ the respective geodesic forest. Then for a fixed $\xx\in\ZZ^2$ the point to substrate geodesic $\gamma_{\phi^n}(\xx)$ will have a root $\Phi_n(\xx)$ whose distance from  $-n\dd$ is of sub-linear order. The proof follows the same method used to prove Lemma 5.2 \cite{CaPi} in the Poissonian last-passage percolation model, and can be extended to the lattice context with exponential weights as well. The key is the knowledge of the curvature of the limiting shape, which is explicitly known in both cases. This implies that the sequence of finite geodesics paths $(\gamma_{\phi^n}(\xx))_{n\geq 1}$ has asymptotic direction $(-1,-1)$ and hence, by \eqref{semigeo}, will converge to the semi-infinite geodesic path $\gamma(\xx)$. Moreover, this convergence holds simultaneously for any finite collection of sites. Therefore this geodesic forest will weakly converge to $\cal{F}$, as $n\to\infty$. If one takes a flat substrate with slope $-\lambda$, the same result holds but now the limiting object  is the geodesic tree composed by semi-infinite geodesics with direction $-(1, \frac{1}{\lambda})$.
	
	\subsection{Scaling the height of a tree}
	The proof of the scaling behaviour of point to point exponential last-passage percolation times, and its connection with the Tracy-Widom distribution, was performed by Johansson \cite{Jo}. This result was later extended to convergence to the Airy process \cite{CoFePe}. Precisely:
	\begin{equation}\label{scalelpp}
	\lim_{n\to\infty}\frac{L\left(\0,( n+\lfloor 2^{2/3}xn^{2/3}\rfloor, n-\lfloor 2^{2/3}xn^{2/3}\rfloor)\right)-4n}{2^{4/3}n^{1/3}}\stackrel{dist.}{=}\cal A(x)-x^2\,,
	\end{equation}
	where $\{\cal A(x)\,:\,x\in\R\}$ is the so called Airy process ($\lfloor x\rfloor$ denotes the integer part of $x\in\R$). The characteristic exponents $\chi=1/3$ and $\xi=2/3$ are believed to describe the fluctuations of a broad class of interface growth models, named the Kardar-Parisi-Zhang (KPZ) universality class. In this section we will explain how to use \eqref{scalelpp} to shed light on the scaling scenario of geodesic forests. 
	
	\begin{figure}[h]
		\includegraphics[height=5cm]{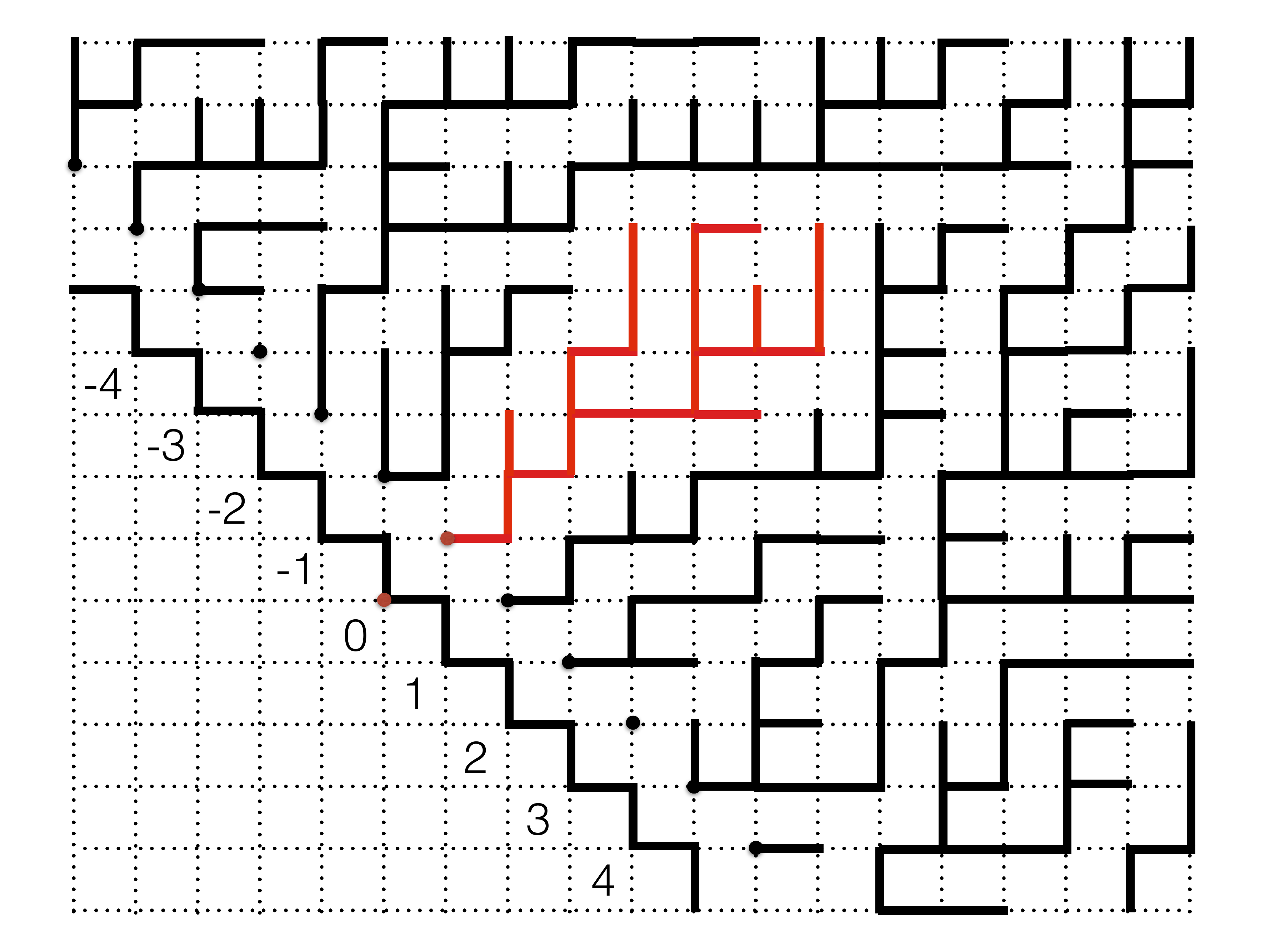}
		\caption{Flat periodic substrate. The height of the tree at $0$ is $H_0=12$.}
			\end{figure}             
	From now on we fix the substrate $\phi$ whose set of corners is given by the diagonal $\{(z,-z)\,:\,z\in\ZZ\}$. Recall that the geodesic tree with substrate $\phi$ and rooted at $(z,-z)$ was defined as the set of all point to substrate geodesics with common root at $(z,-z)$:
	$$\cal{T}_z:=\left\{\gamma_\phi(\xx)\,:\,\Phi(\xx)=(z,-z)\right\}\,.$$
	Let $L(n):=\left\{(x(1),x(2)\in\ZZ^2\,:\,x(1)+x(2)=n\right\}$. The height of  $\cal{T}_z$ is defined as 
	$$H_z:=\max\left\{n\geq 1\,:\, (x(1),x(2))\in\cal{T}_z\,\mbox{ for some }\,(x(1),x(2))\in L(n+1) \right\}\,.$$
	Then a.s. $H_z<\infty$ for all $z\in\ZZ$ (this follows from Antunovi\'c and Procaccia \cite{AP}). It is not hard to see that 
	$$\P\left(H_0\geq n\right)\geq cn^{-1}\,\,\mbox{ and }\,\,\E H_0=\infty\,.$$
	A harder task is to determine the precise tail decay of $H_0$ and the respective power law  exponent. We expect
	$$\P\left(H_0\geq  n\right)\sim n^{-2/3}\,,\mbox{ as }n\to\infty\,,$$
and the reason is related to \eqref{scalelpp}.	
	To perform a scaling limit of the height function one can take the maximum of the height among a  finite collection of trees,
	$$H(m):=\max_{z\in(0,m]}H_z\,.$$
 Again by \eqref{scalelpp}, we expect that the right scaling is $m^{3/2}$. Here we will prove the following theorems.
	\begin{thm}\label{thm:tail}
		There exists $c_0>0$ such that 
		$$\P\left(H_0\geq n\right)\geq \frac{c_0}{n^{2/3}}\,.$$
	\end{thm}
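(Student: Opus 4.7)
The plan is to exhibit a specific site $\yy_n\in L(n+1)$ with $\P(\Phi(\yy_n)=\0)\geq c_0 n^{-2/3}$; since $\{\Phi(\yy_n)=\0\}\subseteq\{H_0\geq n\}$ this is enough. I would take $\yy_n$ to be the point of $L(n+1)$ closest to the main diagonal, say $\yy_n=(\lceil (n+1)/2\rceil,\lfloor(n+1)/2\rfloor)$, and introduce
$$F_z^{(n)}\;:=\;L\bigl((z+1,-z+1),\yy_n\bigr),\qquad |z|\leq n/2,$$
so that $\{\Phi(\yy_n)=\0\}=\{\argmax_{z}F_z^{(n)}=0\}$. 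The problem thus reduces to a local estimate for the discrete argmax of the random field $z\mapsto F_z^{(n)}$.

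Next I would set up the scaling limit of this field. Expanding the exponential-LPP shape function $(\sqrt a+\sqrt b)^2$ around the diagonal gives the deterministic part $F_z^{(n)}\approx 2n-z^{2}/n$, while \eqref{scalelpp}, upgraded to the joint (process) convergence established in \cite{CoFePe}, says that the rescaled fluctuation field
$$x\;\longmapsto\; 2^{-4/3}(n/2)^{-1/3}\Bigl(F^{(n)}_{\lfloor 2^{2/3}(n/2)^{2/3}x\rfloor}-2n\Bigr)$$
converges in distribution, as a process in $x$, to $\cal A(-x)-x^{2}$. Consequently $(n/2)^{-2/3}\argmax_{z}F_z^{(n)}$ converges in distribution to $-\argmax_{x}\{\cal A(x)-x^{2}\}$, which is a.s.\ a unique point whose law has a continuous density $f$ that is positive at $0$ (by symmetry of the Airy$_2$ process, as exploited in the point-process-of-maxima approach of \cite{Pi}).

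The event $\{\argmax_{z}F_z^{(n)}=0\}$ then corresponds, under this rescaling, to the argmax of $\cal A(x)-x^{2}$ lying in an interval of length of order $n^{-2/3}$ around $0$, which has probability of order $f(0)\cdot n^{-2/3}$. This gives the claim with $c_0=c\cdot f(0)$ for a suitable absolute constant $c>0$.

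The main obstacle is upgrading the weak convergence of the argmax to the quantitative local estimate $\P(\argmax_{z}F_z^{(n)}=0)\geq c n^{-2/3}$ for the integer-valued argmax, which is effectively a local limit theorem for the argmax of the Airy$_2$-minus-parabola process and does not follow directly from \eqref{scalelpp}. Its execution requires either tightness and regularity arguments on the joint density (as in \cite{Pi}) or, alternatively, a rewriting of $\{\Phi(\yy_n)=\0\}$ via two one-sided Busemann-type random walks along the substrate (in the spirit of Theorem \ref{rootlaw}), now specialised to the degenerate flat-substrate regime.
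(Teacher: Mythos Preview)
Your setup is on the right track: you correctly identify that the rescaled root location converges in distribution to $X=\argmax_x\{\cal A(x)-x^2\}$, which is exactly \eqref{scalarg} in the paper. But the gap you yourself name is real and is not filled. Passing from weak convergence of the rescaled argmax to a lower bound on $\P(\argmax_z F_z^{(n)}=0)$ for the \emph{integer} argmax is a local limit statement that does not follow from \eqref{scalelpp}, and neither of the two routes you sketch (joint-density regularity, or a flat-substrate Busemann computation) is actually carried out. As written, the proposal stops short of a proof.

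The paper avoids this difficulty entirely with a stationarity-plus-union-bound trick. Rather than asking that the root of one specific $\yy_n$ be exactly at $\0$, it notes that $\{H_0\geq n\}=\{\zeta_n(0)=1\}$, where $\zeta_n(z)$ indicates that $(z,-z)$ is the root of \emph{some} point on $L(n+1)$, and that the counting process $\cal Z_n(m)=\sum_{z\in(0,m]}\zeta_n(z)$ is translation-invariant with intensity $p_n=\P(H_0\geq n)$. The weak convergence you already have gives $\P(|Z(n)|<cn^{2/3})>\epsilon$ for all $n$; but $|Z(n)|<m$ forces at least one root in an interval of $2m-1$ sites, so by stationarity and the union bound
\[
\epsilon\;<\;\P\bigl(|Z(n)|<m\bigr)\;\leq\;\P\bigl(\cal Z_n(2m-1)\geq 1\bigr)\;\leq\;2m\,p_n\,,
\]
and taking $m\sim cn^{2/3}$ yields $p_n\geq \epsilon/(2cn^{2/3})$ immediately. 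In short, the intensity of a stationary point process is bounded below by the probability of seeing a point in a window divided by the window size, so the soft event ``the argmax lands somewhere in a window of width $n^{2/3}$'' already suffices; no local limit theorem is needed.
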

	
	\begin{thm}\label{thm:maxscal}
		Let
		$$\GG(r):=\liminf_{m\to\infty}\P\left(\frac{H(m)}{2^{-1}m^{3/2}}> r\right)\,.$$
		Then 
		\begin{equation}\label{maxscal}
		\GG(r)\geq\P\left(X \in \left(-(2r^{2/3})^{-1},(2r^{2/3})^{-1}\right] \right)\,,
		\end{equation}
		where $X:=\arg\max_{x\in\R}\left\{\cal A(x)-x^2\right\}$.
	\end{thm}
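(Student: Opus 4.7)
The plan is to test a single, carefully chosen lattice point $\xx_m$ whose root lies in $\{(k,-k):1\leq k\leq m\}$ with the claimed limiting probability; when this happens, $\xx_m$ belongs to one of the trees counted in $H(m)$ and forces $H(m)>rm^{3/2}/2$. Set $N_m:=\lceil rm^{3/2}/2\rceil$, $j_m:=\lfloor m/2\rfloor$, and $\xx_m:=(N_m+j_m,\,N_m-j_m)\in L(2N_m)$. If $\Phi(\xx_m)=(k,-k)$ with $1\leq k\leq m$, then $\xx_m\in\cal T_k$, hence $H_k\geq 2N_m-1>rm^{3/2}/2$ for all sufficiently large $m$, and therefore
\begin{equation*}
\P\!\left(\frac{H(m)}{2^{-1}m^{3/2}}>r\right)\;\geq\;\P\!\left(\Phi(\xx_m)\in\{(k,-k):1\leq k\leq m\}\right).
\end{equation*}

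To analyze the asymptotic law of $\Phi(\xx_m)$, I would use translation invariance of LPP together with its point-reflection symmetry $L(\xx,\yy)\eqdi L(-\yy,-\xx)$ to obtain the joint-in-$k$ identity
\begin{equation*}
\{L(\zz_k+\dd,\xx_m)\}_{k\in\ZZ}\;\eqdi\;\{L(\0,\xx_m-\zz_k-\dd)\}_{k\in\ZZ}\,,
\end{equation*}
where $\zz_k=(k,-k)$ and $\xx_m-\zz_k-\dd=(n+(j_m-k),\,n-(j_m-k))$ with $n:=N_m-1$. By the process-level convergence of exponential LPP to the Airy$_2$ process minus a parabola \cite{CoFePe}, the rescaled profile parametrized by $x=(j_m-k)/(2^{2/3}n^{2/3})$ converges weakly on compact $x$-intervals to $\cal A(x)-x^2$. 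Using that $\cal A(x)-x^2$ admits an almost surely unique maximizer $X$ with continuous distribution, together with uniform tightness of the prelimit maximizer (standard one-point Tracy--Widom upper-tail estimates bound the probability that the maximum is attained at $|x|>R$, uniformly in $m$, against the confining drift $-x^2$), the continuous mapping theorem yields
\begin{equation*}
\frac{j_m-\Phi(\xx_m)_1}{2^{2/3}n^{2/3}}\;\Longrightarrow\;X\,.
\end{equation*}

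Finally, the calibration was chosen so that $2^{5/3}n^{2/3}/m\to 2r^{2/3}$; hence the event $\Phi(\xx_m)_1\in\{1,\dots,m\}$ translates into $(j_m-\Phi(\xx_m)_1)/(2^{2/3}n^{2/3})$ lying in an interval converging to $[-(2r^{2/3})^{-1},(2r^{2/3})^{-1})$, which has the same $X$-probability as $(-(2r^{2/3})^{-1},(2r^{2/3})^{-1}]$ by continuity of the law of $X$. The Portmanteau theorem then delivers the required $\liminf$, completing the proof. The main technical obstacle is the argmax step: promoting weak convergence of the entire profile to convergence of its discrete argmax over all of $\ZZ$ requires the uniform tightness indicated above, which is the standard but delicate KPZ-type estimate combining one-point tail bounds with the confining parabola; with this input in hand, the rest of the argument is direct.
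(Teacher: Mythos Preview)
Your proof is correct and follows essentially the same idea as the paper's: test a single lattice point at the right height and show that the event ``its root lands in the target interval'' both forces $H(m)>r m^{3/2}/2$ and has the claimed limiting probability via the rescaled-argmax convergence to $X$. The only real differences are cosmetic: the paper places the test point at $(n,n)$ and then uses stationarity of the root counting process $\mathcal Z_n$ to slide the interval $(-m,m)$ onto $(0,2m-1]$, whereas you pre-center the test point at $j_m=\lfloor m/2\rfloor$; and the paper simply cites the convergence $Z(n)/(2^{2/3}n^{2/3})\Rightarrow X$ (their (3.1), ``analog to Theorem~1.6 in \cite{Jo}''), whereas you unpack it via the joint distributional identity, Airy process convergence, tightness of the argmax, and continuous mapping. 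Both routes arrive at the same interval $(-(2r^{2/3})^{-1},(2r^{2/3})^{-1}]$.
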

	It is known that $X$ has a density and an explicit formula can be found in \cite{MoQuRe}. Combining this with \eqref{maxscal} one get as corollary a lower bound for the limiting tail function $\GG$.   
	\begin{coro}\label{lowertail}
		Let $f$ denote the density of $X$. Then 
		$$\liminf_{r\to\infty}r^{2/3}\GG(r)\geq f(0)>0\,.$$
		Furthermore,
		$$\lim_{r\to 0^+}\GG(r)=1\,.$$
	\end{coro}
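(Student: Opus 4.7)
The plan is to derive both claims directly from the lower bound
$$\GG(r)\;\geq\;\P\!\left(X\in\bigl(-(2r^{2/3})^{-1},(2r^{2/3})^{-1}\bigr]\right)$$
supplied by Theorem \ref{thm:maxscal}, using only standard properties of the density $f$ of $X=\arg\max_{x\in\R}\{\cal A(x)-x^2\}$. The key observation is that the interval appearing in the bound has length exactly $r^{-2/3}$, so the right-hand side is
$$\int_{-(2r^{2/3})^{-1}}^{(2r^{2/3})^{-1}} f(x)\,dx.$$
For the first assertion I would multiply through by $r^{2/3}$ and pass to the limit: since $f$ is continuous (as established in \cite{MoQuRe}), the mean-value theorem for integrals gives
$$r^{2/3}\int_{-(2r^{2/3})^{-1}}^{(2r^{2/3})^{-1}} f(x)\,dx \;\longrightarrow\; f(0)\qquad\text{as } r\to\infty,$$
whence $\liminf_{r\to\infty} r^{2/3}\GG(r)\geq f(0)$. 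That $f(0)>0$ will be read off from the explicit formula in \cite{MoQuRe}; since $X$ is symmetric (the Airy process is stationary and symmetric, and the parabola $-x^2$ is even) and has a continuous density with full support, $f(0)$ is in fact the mode and hence strictly positive.

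For the second assertion, as $r\to 0^+$ the bound $(2r^{2/3})^{-1}\to+\infty$, so the interval exhausts $\R$; by dominated convergence (or monotone convergence applied to the intervals),
$$\P\!\left(X\in\bigl(-(2r^{2/3})^{-1},(2r^{2/3})^{-1}\bigr]\right)\;\longrightarrow\;\P(X\in\R)=1.$$
Since $\GG(r)\leq 1$ trivially, this forces $\GG(r)\to 1$.

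I do not anticipate any serious obstacle: the two statements are essentially asymptotic readouts of the single inequality from Theorem \ref{thm:maxscal}, combined with continuity and positivity of the density of the Airy-minus-parabola argmax at the origin. The only subtlety is the appeal to $f(0)>0$, which is either justified by symmetry together with unimodality considerations, or simply by plugging $x=0$ into the explicit Fredholm-type expression for $f$ in \cite{MoQuRe}; I would cite the latter to keep the argument short.
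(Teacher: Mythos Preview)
Your proposal is correct and follows exactly the route the paper indicates: the paper does not give a separate proof of this corollary, stating just above it that it follows by combining \eqref{maxscal} with the existence of a density for $X$ from \cite{MoQuRe}, and your argument spells out precisely those details (length of the interval equals $r^{-2/3}$, continuity of $f$ at $0$ for the first limit, and expansion of the interval to $\R$ for the second). The only caveat is that your aside about $f(0)$ being the mode via unimodality is not needed and not established here; your alternative of citing the explicit formula in \cite{MoQuRe} for $f(0)>0$ is the right choice and is what the paper relies on.
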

	
	\subsubsection{The weighted substrate model and coalescence times}
	Consider the exponential weighted substrate model with parameter $p_-=p_+=1/2$. The height of a tree is now defined as 
	$$H_k:=\max\left\{n\geq 0\,:\,(x,n)\in\cal{T}_k \cup\{(k,0)\}\,,\mbox{ for some }\,x\in\ZZ \right\}\,,$$
	and the maximum $H(m)$ over $(0,m]$ has an analog definition. The analogs to Theorem 3 and Theorem 4 can be obtained for this model as well, but now the rescaling factor for $H(m)$ is $2^{-5/2}m^{3/2}$ and the lower bound is giving as function of the distribution of $\bar X :=\arg\max_{x\in\R}\left\{\sqrt{2}\cal B(x)+\cal A(x)-x^2\right\}$, where $\cal B$ is a standard Brownian motion process independent of $\cal A$. 
	
	 The distribution of $H(m)$ is also related to coalescence times of semi-infinite geodesics. Indeed, let $\cc(m)$ denote the coalescence point \eqref{coal} between the semi-infinite geodesics starting at $\0$ and $(m,0)$, and with direction $(1,1)$,  and let $T(m)$ denote the second coordinate of $\cc(m)$. By self-duality of the geodesic tree \cite{Pi},  
	\begin{equation}\label{dual}  
	T(m)\stackrel{dist.}{=}H(m)\,.
	\end{equation}
	Thus, the heavy tail lower bound for $H_0$ implies that   
	$$\P\left(T(1)\geq n\right)\geq \frac{c_0}{n^{2/3}}\,.$$
	
	\subsubsection{Conjectural picture and KPZ universality}
	It is believed that the space and time fluctuations of models in the KPZ universality class can be described by variational problems involving a four parameter field $\cal A(u,x;t,y)$, where $0\leq u<t$ are time coordinates and $x,y\in\R$ are space coordinates. This field is called the \emph{space-time Airy sheet}. We address to \cite{CoQua} for a more complete description of this field and its conjectural relation with the KPZ  universality class. 
	
	In last-passage percolation models, the space-time Airy sheet would appear as the limit fluctuations of last-passage percolation times. Denote 
	$$(u,x)_n:=(\lfloor un+2^{2/3}xn^{2/3}\rfloor,\lfloor un-2^{2/3}xn^{2/3}\rfloor)\,\,\mbox{ and }\,\,(t,y)_n:=(\lfloor nt+2^{2/3}yn^{2/3}\rfloor,\lfloor nt-2^{2/3}yn^{2/3}\rfloor)\,.$$
	Then it is expected that  
	\begin{equation}\label{scalelpp1}
	\lim_{n\to\infty}\frac{L\left((u,x)_n,(t,y)_n\right)-4n(t-u)}{2^{4/3}n^{1/3}}\stackrel{dist.}{=}\cal A(u,x;t,y)-\frac{(y-x)^2}{t-u}\,.
	\end{equation}    
	For fixed times $u,t$, tightness in the space of two-dimensional continuous fields is already known \cite{CaPi1}, however no uniqueness result is available so far. 
	
	By taking $u=0$ and $t=1$, one gets a two-dimensional field $\cal A(x,y)$, called the Airy sheet. This field gives rise to a point process on the real line as follows. For each $y\in\R$ let 
	$$X(y):=\sup\arg\max_{x\in\R}\left\{\cal A(x,y)-(y-x)^2\right\}\,.$$
	The process $(X(y)\,,\,y\in\R)$ is expected to be a right-continuous pure jump process which runs through the locations of maximisers of the Airy sheet minus a drifting parabola. Notice that $X(0)$ corresponds to $X$ as defined in Theorem \ref{thm:maxscal}. The reason for taking the supremum is that there will be points $y\in\R$ such that the $\arg\max$  is not uniquely defined. However, it is not hard to see that for fixed $y$ a.s. there will be a unique maximiser \cite{Pi1} (it follows from space stationarity of the Airy sheet). This process should be similar, \textit{grosso modo}, to the Groeneboom process, which arises by taking a Brownian motion minus a drifting parabola \cite{Gro}.
	
	Related to $X(\R)=\{X(y)\,,\,y\in\R\}$ there is a stationary point process $\cal U$ defined as  
	$$\cal U(x):=\#\left(X(\R)\cap(0,x]\right)\,,$$
	which counts the number of maximisers $X(y)$ lying within the interval $(0,x]$. We conjecture that 
	\begin{equation}\label{tailconj}
	\exists\,\lim_{n\to\infty}n^{2/3}\P\left(H_0\geq n\right)=\frac{\E\cal U(1)}{2^{2/3}}\,.
	\end{equation}
	The motivation for \eqref{tailconj} is explained at the end of the proofs. In Figure \ref{simulationtree}
	we illustrate the result of crude Monte Carlo simulation\footnote{We used a random sample of size $50,000$ where the height of the tree was truncated at $n=1000$, using JULIA (through JUNO) programming language.}. It is compared with powerlaw decay taking $2.364$ as an estimate of $\E \cal U(1)$, a value suggested by those simulations.
	
		\begin{figure}[h]
			\includegraphics[height=10cm]{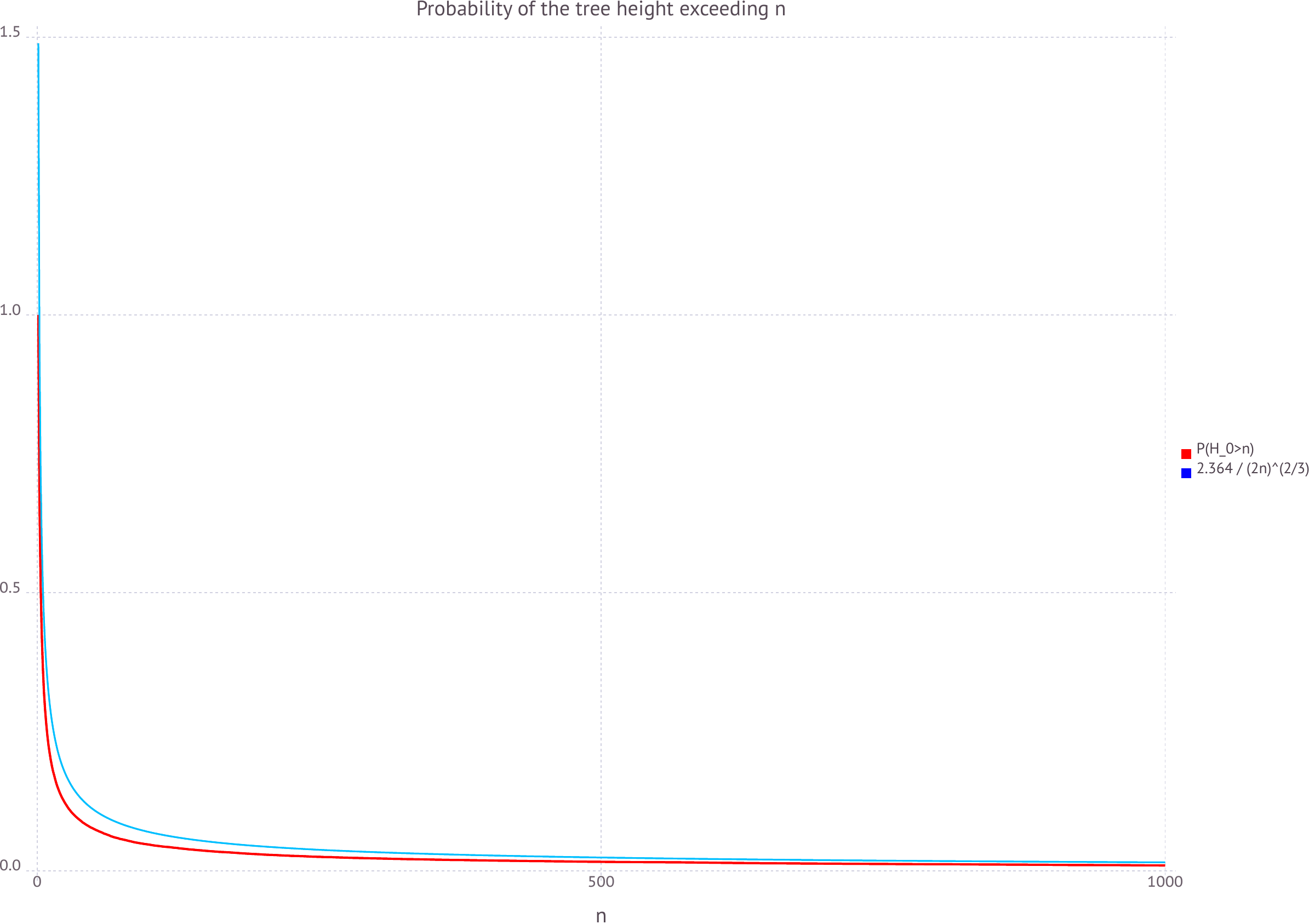}
			\caption[Estimates of $\P(H_0 \geq n)$ done by simulations.]{Estimates of $\P(H_0 \geq n)$ done by simulations.}\label{simulationtree}
		\end{figure}          
	
	The same reasoning will also yield to a conjectural description of the limiting distribution of $H(m)$ (after rescaling) in terms of $\cal U$:   
	\begin{equation}\label{scalheight1}
	\exists\,\lim_{m\to\infty}\P\left(\frac{H(m)}{2^{-1}m^{3/2}}\leq r\right)=1-\GG(r)=\P\left(\cal{U}(r^{-2/3})= 0\right)\,,
	\end{equation}
	and 
	\begin{equation}\label{scalheight2}
	\exists\,\lim_{r\to\infty}r^{2/3}\GG(r)=\E\cal U(1)\,.
	\end{equation}
	
	For the geodesic forest with an exponential weighted substrate (of parameter $1/2$) we expect to have a process $\bar{\cal U}(x):=\#\left(\bar X(\R)\cap(0,x]\right)$ where 
	$$\bar X(y):=\sup\arg\max_{x\in\R}\left\{\sqrt{2}\cal B(x)+\cal A(x,y)-(y-x)^2\right\}\,,$$
	and $\cal B(x)$ is independent a standard two-sided Brownian motion process. Thus, the conjecture will be that 
	\begin{equation}\label{tailconj1}
	\exists\,\lim_{n\to\infty}n^{2/3}\P\left(H_0> n\right)=\frac{\E\bar{\cal U}(1)}{2^{5/3}}\,,
	\end{equation} 
	\begin{equation}\label{scalheight3}
	\exists\,\lim_{m\to\infty}\P\left(\frac{H(m)}{2^{-5/2}m^{3/2}}\leq r\right)=1-\bar\GG(r)=\P\left(\bar{\cal{U}}(r^{-2/3})= 0\right)\,,
	\end{equation}
	and 
	\begin{equation}\label{scalheight4}
	\exists\,\lim_{r\to\infty}r^{2/3}\bar\GG(r)=\E\bar{\cal U}(1)\,.
	\end{equation}
	\begin{rem} We note that the limiting distribution \eqref{scalheight3} coincides with the one conjectured for coalescence times \cite{Pi}. 
	\end{rem}
	
	\section{Proofs}
	
	\subsection{The Busemman field and the location of the root}\label{Bus}
	The coalescence of (backward) semi-infinite geodesics \eqref{coal} in the direction $(1,-a)$, where $a>0$, gives rise to Busemann field defined as 
	$$B_a(\xx,\yy):=L(\cc,\yy)-L(\cc,\xx)\,,$$
	where $\cc$ is the coalescence point between $\gamma_a(\yy)$ and $\gamma_a(\xx)$. The Busemann function is an alternative construction of the equilibrium measure of last-passage percolation system $(M_t^{\nu})$ defined as $M^\nu_0=\nu$ and (recall \eqref{lppsystem})
	$$M^\nu_t(x,y]:=L_\nu(y,t)-L_\nu(x,t)\,.$$ 
	If one takes $\nu_a(k):=B_a(\0,(0,k))$ for $k> 0$ and $\nu_a(k):= B_a((0,k),\0)$ for $k\leq 0$, then $M^{\nu_a}_t\stackrel{dist.}{=}\nu_a$ for all $t\geq 0$. The i.i.d. exponential profile of parameter $1-p$ is also an equilibrium measure and this allows us get that $B_\lambda((t,k-1),(t,k))$ for $k\in\ZZ$, are i.i.d. exponential random variables of parameter  $1-\rho_a$ \cite{CaPi}. By symmetry, one can get the same result for Busemann functions in the (forward) direction $(1,a)$. By also using Burkes' property, one can describe the distribution of the Busemann function along the substrate $\phi$ as follows (see Lemma 3.3 in \cite{CaPi2}):  
	\begin{lem}\label{Busemann}
		Fix $a>0$, denote $B_a$ the Busemman field in the direction $(1,a)$ and recall the definition \eqref{rw} of the random walk $S^{a,\phi}$ . Then,
		$$\left\{B_a(\dd,\phi_z+\dd)\,:\,z\in\ZZ\right\}\stackrel{dist.}{=}\left\{S^{a,\phi}(z)\,:\,z\in\ZZ\right\}\,.$$
	\end{lem}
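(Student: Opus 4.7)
The strategy is to match the two processes by comparing their increments and then to invoke a Burke-type property of the Busemann field along the down-right substrate.

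First, both processes vanish at $z=0$: since $\phi_0=\0$ we have $B_a(\dd,\phi_0+\dd)=B_a(\dd,\dd)=0=S^{a,\phi}(0)$. Using the additivity $B_a(\dd,\yy)-B_a(\dd,\xx)=B_a(\xx,\yy)$, the process $z\mapsto B_a(\dd,\phi_z+\dd)$ telescopes into a sum of single-edge Busemann increments along $\phi$, and $S^{a,\phi}$ is by construction a sum of the increments $X_z^{a,\phi}$. It is therefore enough to establish the joint equality in distribution of the two resulting increment families indexed by $z\in\ZZ$.

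Next, one identifies the marginal distribution of each increment. The paper has already recorded that, for the Busemann field in direction $(1,a)$, the vertical unit increments $B_a((t,k-1),(t,k))$ are i.i.d.\ $\Exp(1-\rho_a)$ and, by the $\rho\leftrightarrow 1-\rho$ symmetry of exponential LPP, the horizontal unit increments $B_a((k-1,t),(k,t))$ are i.i.d.\ $\Exp(\rho_a)$. A sign bookkeeping — which edge of $\phi$ we are crossing, in which direction, after the shift by $\dd$ — shows that each single-edge Busemann increment has precisely the distribution prescribed for the corresponding $X_z^{a,\phi}$: e.g.\ a rightward step of $\phi$ on the $z\geq 1$ side yields $-\Exp(1-\rho_a)$, a downward step yields $+\Exp(\rho_a)$, and the two cases on the $z\leq 0$ side come out with the reversed signs as in \eqref{rw}.

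The main obstacle is upgrading the matching of marginals to a matching of joint laws, i.e.\ showing that the Busemann increments across distinct edges of $\phi$ are \emph{mutually independent}. This is a Burke-type property of the exponential LPP model applied to the Busemann field: the Busemann measure is the stationary regime of the LPP dynamics with asymptotic slope fixed by $\rho_a$, and along any down-right staircase the horizontal and vertical increments of such a stationary regime decouple into independent exponentials. I would prove this by realising $B_a(\dd,\cdot)$ as the limit of last-passage times in a stationary LPP system whose boundary data are exponential increments placed along $\phi$ — an approximation valid by the convergence of (appropriately shifted) point-to-point last-passage times to Busemann functions in direction $(1,a)$ — and then invoking Burke's theorem for exponential LPP, which gives exactly the required product structure for the boundary fluctuations. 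This is the content of Lemma~3.3 of \cite{CaPi2}, which one may cite directly to conclude.
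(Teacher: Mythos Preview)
Your proposal is correct and takes essentially the same approach as the paper: the paper does not give an independent proof of this lemma but simply records that the Busemann increments along a down-right path are i.i.d.\ exponentials with the stated parameters via Burke's property, and defers to Lemma~3.3 of \cite{CaPi2}. Your write-up is a faithful unpacking of that citation---telescoping into edge increments, matching marginals, and invoking Burke for independence---so the two arguments coincide, with yours being the more explicit of the two.
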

	
	The next ingredient in the proof is to ensure that the sequence of roots with respect to a sequence of lattice points in a direction $(1,a)$, within the rarefaction interval $(\lambda^2_-,\lambda_+^2)$, stay bounded. This is given by following lemma, which follows from Proposition 3.1 \cite{FeMaPi} (see also Lemma 3.4 \cite{CaPi2}). 
	\begin{lem}\label{rootcontrol}
		Let $a\in(\lambda^2_-,\lambda_+^2)$. Then a.s. for any sequence of lattice points $(\xx_n)_{n \geq 1}$ with asymptotic inclination $(1,a)$, there is $M>0$ such that $\Phi(\xx_n)\in\{\phi_{-M},\cdots,\phi_M\}$ for all $n\geq 1$.
	\end{lem}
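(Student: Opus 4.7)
The plan is to combine the shape theorem for exponential LPP with the strict concavity of the limit shape $g(x,y)=(\sqrt x+\sqrt y)^2$, and then upgrade the resulting variational gap to an almost-sure bound via concentration estimates. The rarefaction condition $a\in(\lambda_+^2,\lambda_-^2)$ is precisely what makes this work: it guarantees that, at the macroscopic scale, the variational problem for the optimal starting point along the substrate has a \emph{unique} maximum at the origin, with a strictly negative quadratic expansion on either side.

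The first step is to set up this variational picture. For any $z\in\ZZ$ with $\phi_z/n\to(\alpha,\beta)$ in a compact subset of the open region below $(1,a)$, the shape theorem gives
\begin{equation*}
\frac{1}{n}L(\phi_z+\dd,\xx_n)\longrightarrow g\bigl(1-\alpha,\,a-\beta\bigr).
\end{equation*}
Parametrising the macroscopic substrate as two half-lines through the origin of slopes $-\lambda_-$ (for $z<0$) and $-\lambda_+$ (for $z>0$), a direct computation — using that the characteristic direction of stationary LPP with boundary density $\rho$ is $(1,((1-\rho)/\rho)^2)$ — shows that the function $(\alpha,\beta)\mapsto g(1-\alpha,a-\beta)$ restricted to the macroscopic substrate attains its unique maximum at $(\alpha,\beta)=(0,0)$ exactly when $a\in(\lambda_+^2,\lambda_-^2)$. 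This is the variational content already exploited in \cite{FeMaPi,CaPi2}.

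The second step converts this gap into an almost-sure bound. Strict concavity and the unique interior maximum yield, for every $\eps>0$, a $\delta=\delta(\eps)>0$ such that $g(1-\alpha,a-\beta)\le g(1,a)-\delta$ whenever $(\alpha,\beta)$ lies on the macroscopic substrate at distance at least $\eps$ from $\0$. Exponential concentration of $L$ around its mean — the content of Proposition~3.1 in \cite{FeMaPi} — summed over $z$ with $|\phi_z|\ge\eps n$ and over $n$ along a deterministic subsequence, together with a Borel--Cantelli argument, rules out such macroscopic roots eventually. To pass from $|\Phi(\xx_n)|\le\eps n$ to the required constant bound $M$, one iterates on finer scales: the quadratic behaviour of the shape near the origin gives a gap of order $|\phi_z/n|^2$, which dominates the $n^{1/3}$-order KPZ fluctuations as long as $|\phi_z|\gg n^{2/3}$, and a further sharpening, as in Lemma~3.4 of \cite{CaPi2}, removes the remaining scales down to $O(1)$.

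The main obstacle is precisely the last step: passing from the crude macroscopic bound to a uniform-in-$n$ bounded-$M$ statement requires fluctuation estimates that are simultaneously summable over a growing window along the substrate and tight on the KPZ scale. This is exactly the kind of input provided by Proposition~3.1 of \cite{FeMaPi}. Conceptually, the rarefaction assumption is indispensable here: at the endpoints $a=\lambda_\pm^2$ the variational maximum degenerates and the root is no longer localised, consistent with the $Z_\phi(a)\to\pm\infty$ behaviour of Corollary~\ref{infinite}.
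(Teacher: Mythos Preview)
The paper does not actually prove this lemma; it simply records that the statement follows from Proposition~3.1 of \cite{FeMaPi}, with Lemma~3.4 of \cite{CaPi2} cited as a parallel reference. Your sketch is a fleshed-out version of what lies behind those citations, and you invoke the same two sources at exactly the point where the hard work sits (the passage from a sublinear bound on the root to a fixed $M$), so your treatment is aligned with the paper's.

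One correction is worth making. Because $\lambda_->\lambda_+$, the macroscopic substrate is a genuine concave \emph{wedge}, and the map $(\alpha,\beta)\mapsto g(1-\alpha,a-\beta)$ restricted to it has a \emph{corner} maximum at the origin, not a smooth interior one. The one-sided directional derivatives along the two half-lines work out to $(1+\sqrt a)(\lambda_+/\sqrt a-1)$ on the right and $(1+\sqrt a)(1-\lambda_-/\sqrt a)$ on the left, both strictly negative precisely when $a\in(\lambda_+^2,\lambda_-^2)$. So the variational gap is \emph{linear} in $|\phi_z|/n$, not quadratic. This only strengthens your fluctuation comparison (the gap already dominates $n^{1/3}$ once $|\phi_z|\gg n^{1/3}$ rather than $n^{2/3}$), but it is the mechanism that singles out the rarefaction interval and is worth stating correctly. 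The genuinely delicate step---collapsing any sublinear window down to an $n$-independent $M$---is, as you acknowledge, where one leans on \cite{FeMaPi,CaPi2}; both you and the paper defer that step to those references.
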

	
	\noindent\paragraph{\bf Proof of Theorem \ref{rootlaw}} By \eqref{rootmax},
	$$\Phi(\xx_n)=\argmax _{\zz\in\cal{C}_{\xx_n}(\phi)} L(\zz+\dd,\xx_n)=\argmax _{\zz\in\cal{C}_{\xx_n}(\phi)}\left( L(\zz+\dd,\xx_n)-L(\dd,\xx_n)\right)\,.$$
	By Lemma \ref{rootcontrol},
	$$\lim_{n\to\infty}\argmax _{\zz\in\cal{C}_{\xx_n}(\phi)}\left( L(\zz+\dd,\xx_n)-L(\dd,\xx_n)\right)\stackrel{dist.}{=}\argmax _{\zz\in\cal C(\phi)}B_a(\zz+\dd,\dd)\,.$$
	Together with Lemma \ref{Busemann}, this implies Theorem \ref{rootlaw}.
	
	\subsection{Computable models}

\noindent\paragraph{\bf Proof of Proposition \ref{ZrelN}} Define the process $\{W_n\}_{n \leq 0}$ by
\begin{equation*}
W_{-n}:= \max_{-k \leq -n} ( S_k - S_n) = -S_n - \min_{-k \leq -n} (-S_k)   \qquad \forall n \geq 0,
\end{equation*}
and the random variable $\tau:= \min \{ n \geq 0 : W_{-n} = 0 \}$.  We prove now that $(Z,M)=(\tau, W_0)$. \\

For $n=0, x \geq 0$ we have r
$$\{ \tau=0, W_0= x \}  = \{ S_0=0,  S_0= \max_{ n \geq 0 } S_n,  x = 0 \}  = \{ Z= 0 , M= x  \} .$$
In the case of $n \geq 1, x \geq 0$, it holds
\begin{eqnarray*}
	\{ \tau=n, W_0 >x \} &=& \{ X_n > 0, ..., X_n + \cdots +X_1 > 0, X_n + \cdots  +X_1 > x, S_n = \max_{ k \geq n } S_k \}  \\
	&=& \{ S_n - X_n <  S_n, ...,  S_n - (X_n+\cdots+X_2) < S_n, S_n >x, S_n = \max_{ k \geq n } S_k  \} \\
	&=& \{ S_{n-1} < S_n, ...,  S_{1} < S_n, S_n >x, S_n = \max_{ k \geq n } S_k \} \\
	&=&\{ S_n > x, S_n  = \max_{ k \geq 0 } S_k \} =\{ Z =n , M >x \}.
\end{eqnarray*}
By construction the process $\{W_n\}_{n \leq 0}$ is stationary. Besides, it satisfies \textit{Lindley property}:
\begin{equation}\label{Lindley}
W_{-n}= (W_{-n-1} + X_{n+1})^*   \qquad \forall n \geq 0,
\end{equation}
where $X_{n+1}$ and $W_{-n-1}$ are independent random variables and $x^*= \max \{0,x\}$, for $x \in \mathbb{R}$. A set of random variables which satisfy recursion \eqref{Lindley} has an interpretation as sequential waiting times in queueing theory \cite{Phi}. By definition of $\tau$, for all $x \geq 0$ we have that 
$$\P(Z=0,M=x)=\P (\tau =0, W_0 \in [x, x+dx) ) = 1_{ \{   x=0 \} } \P (W_0=0), $$ 
while for $n \geq 1$, $x > 0$ we have
\begin{eqnarray*}
	\P(Z=n, M \in [x, x+dx)) &=& \P (\tau = n, W_0 \in [x, x+dx) ) \\
	 &=& \P (W_{-n}=0, W_{-n+1}>0,..., W_{0} >0, W_0 \in [x, x+dx)) \\
	& = & \P (W_{-n+1}>0, \dots,W_{0} >0, W_0 \in [x, x+dx) \Big| W_{-n} = 0 ) \P ( W_{-n} = 0 ) \\
	&=& \P ( X_{n} >0,\dots, X_{n} +\cdots + X_{1} >0, X_{n} +\cdots + X_{1}  \in [x, x+dx) )  \\
	&   & \cdot \, \P ( W_{-n} = 0 ),  \\
	& =& \P (N>n, S_n  \in [x, x+dx) ) \P ( W_{-n} = 0 ), 
\end{eqnarray*}
where we used \eqref{Lindley}. Since $\{W_{n}\}_{n \leq 0}$ is stationary, $W_{-n}$ is equal in law to $M$ and we are done. 
\newline
\noindent\paragraph{\bf Proof of Proposition \ref{jointexpl} } Define the function
$$ \phi_{Z^\pm_a, M^\pm_a }(s,u) := \sum_{n=1}^{\infty} \int_{0}^{\infty} s^n \, e^{ux} f_{Z^\pm_a, M^\pm_a}(n,x) dx. $$
By Proposition \ref{ZrelN}, we can express $\phi_{Z^\pm_a, M^\pm_a }(s,u)$ as
\begin{eqnarray}\label{sumtrans}
\phi_{Z^\pm_a, M^\pm_a }(s,u)  &= &  \sum_{n=1}^{\infty}  \int_{0}^{\infty} s^n e^{ux}   \sum_{j>n}f_{N^\pm_a, S^{a,\pm}_n}(j,x) \P(M^\pm_a=0) dx \nonumber\\
&= & \P(M^\pm_a=0)  \sum_{n=1}^{\infty}  s^n   \P(N^\pm_a >n)  \int_{0}^{\infty}  e^{ux} f_{ (S^{a,\pm}_n =x | N^\pm>n) } dx \nonumber \\
&= & \P(M^\pm_a=0)   \sum_{n=1}^{\infty}  s^n   \P(N^\pm_a >n)  \E( e^{u S^{a,\pm}_n } | N^\pm_a>n ) .
\end{eqnarray}
where all the variables with signs $^\pm$ are the corresponding ones to each one-sided random walk. By \cite{Ig} (p. 744), for all $u \in [0,c]$, and  $0< s < 1$  we have that
$$ \sum_{n=1}^{\infty}  s^n   \P(N_a^\pm >n)  \E( e^{u S^{a,\pm}_n } | N_a^\pm>n )  = (1-s) \exp \Big\{  \sum_{n=1}^{\infty}  \frac{s^n}{n} [ \P(S^{a,\pm}_n>0) + \E(e^{-u(S^{a,\pm}_n)^*})  ]  \Big\} ,$$
then by plug it in (\ref{sumtrans}) and expressing $1-s = \exp \{ - \sum_{n=1}^{\infty} \frac{s^n}{n} \}$, we obtain
\begin{equation}\label{phiZM}
\phi_{Z^\pm_a, M^\pm_a }(s,u) = \P(M^\pm_a=0)  \exp \Big\{  \sum_{n=1}^{\infty}  \frac{s^n}{n} [ \E(e^{-u(S^{a,\pm}_n)^*}) - \P(S^{a,\pm}_n \leq 0)  ]  \Big\}=\phi^\pm_a(s,u)\,.
\end{equation}
Recall the definition of the variables 
$ Z^+_a = \argmax_{ k\geq 0 } S_k^{a,+}$ , $Z^-_a = -\argmax_{k \leq 0}  S_k^{a,-}$.
Then, the joint transform of $(Z_a,M_a)$ can be written as
\begin{eqnarray}\label{joint2side}
\E (s^{Z_a} e^{uM_a} ) &=& \P( M_a^+=0) \P (M_a^-=0) + \sum_{n=1}^\infty \int_{0}^{\infty} s^n e^{ux} f_{Z^+_a,M^+_a}(n,x) \P( M^-_a<x) dx  \\
&+& \sum_{n=-\infty}^{-1} \int_{0}^{\infty} s^n e^{ux} f_{Z^-_a,M^-_a}(-n,x)  \P(M^+_a<x) dx. \nonumber 
\end{eqnarray}
As a consequence of the exponential right tail assumption \eqref{maxdist} we have 
$$\P (M^\pm_a \leq x)= 1 - \Big(1- \frac{\gamma^\pm}{\delta^\pm} \Big) e^{- \gamma^\pm \, x} \qquad \forall x > 0,$$
thus by substituing it in \eqref{joint2side} and factorising the exponents, 
\begin{eqnarray*}
\E(s^{Z_a} \, e^{uM_a})  &=& \Big( \frac{\gamma^+}{\delta^+} \Big) \Big( \frac{\gamma^-}{\delta^-} \Big) +\phi_{Z^+_a, M^+_a}(s,u) 
- \Big( 1 - \frac{\gamma^+}{\delta^+} \Big) \phi_{Z^+_a, M^+_a}(s,u - \gamma^+) \\
&+&  \phi_{Z^-_a, M^-_a}(s,u) - \Big( 1 - \frac{\gamma^-}{\delta^-} \Big)  \phi_{Z^-_a, M^-_a}(s,u- \gamma^-), \nonumber
\end{eqnarray*}
and the result follows since we already calculated $\phi_{Z^\pm_a, M^\pm_a}=\phi^\pm_a$  in \eqref{phiZM}.

	\subsection{Scaling the height function and the root counting process}
	The key in the proof of the theorems for the scaling scenario is the introduction of a point process that counts the number of roots whose geodesic tree has height bigger or equal to $n$. Define the \emph{root counting process} at ``time'' $n\geq 1$ as
	$$\cal{Z}_n(m):=\sum_{z\in (0,m]} \zeta_n(z)\,,\,\mbox{ for }m\geq 1\,,$$ 
	where
	$$\zeta_n(z):=\left\{\begin{array}{ll}1 & \mbox{ if $(z,-z)=\Phi(x_1,x_2)$ and $x_1+x_2=n+1$},\\ 
	0 & \mbox{otherwise}\,.\end{array}\right.$$
	In words, $\zeta_n(z)=1$ if $(z,-z)$ is a root of a tree that intersects the line $x_1+x_2=n+1$. The process $\cal Z_n$ counts the number of such roots in the interval $(0,m]$. Notice that, by translation invariance of the last-passage percolation model, the counting process $\cal Z_n$ is also stationary. It is also not hard to see that $H(m)$, the maximum height on $(0,m]$, and this counting process are related by 
	$$\{H(m)< n\}=\{\cal Z_n(m)=0\}\,,$$
	and so 
	\begin{equation}\label{counting2}
	\P\left(H(m)< n\right)=\P\left(\cal Z_n(m)=0\right)\,.
	\end{equation}
	In particular,
	\begin{equation}\label{counting3}
	p_n:=\P\left(\zeta_n(0)=1\right)=\P\left(H_0\geq n\right)\,.
	\end{equation}
	
	We also note that, by definition, $z$ is a root if and only if it is a point to line maximiser \eqref{rootmax}: $\zeta_n(z)=1$ if and only if there exists $\xx=(x(1),x(2))$ such that $x(1)+x(2)=n+1$ and 
	\begin{equation}\label{maximiser}
	z=\arg\max_{y:(y,-y)\leq \xx} L\left((y+1,-y+1),\xx\right)\,.
	\end{equation}
	Thus, $\cal Z_n$ can also be seen as point process that counts the number of maximisers at ``time'' $n$. 
	\newline  
	
	\noindent\paragraph{\bf Proof of Theorem \ref{thm:tail}} 
	Let $Z(n)$ denote location in $\ZZ$ of the root of $(n,n)$: $\Phi(n,n)=(Z(n),-Z(n))$. By \eqref{scalelpp}, we have that 
	\begin{equation}\label{scalarg}
	\lim_{n\to\infty}\frac{Z(n)}{2^{2/3}n^{2/3}}\stackrel{dist.}{=}X:=\arg\max_{x\in\R}\left\{\cal A(x)-x^2\right\}\,.
	\end{equation}
	(This is analog to Theorem 1.6 in \cite{Jo}.) In particular, there exist $\epsilon,c>0$ such that 
	$$\P\left(|Z(n)|< cn^{2/3}\right)>\epsilon\,,$$
	for all $n\geq 1$. If $|Z(n)|< m$ then there will at least one root in $(-m,m)$. By stationarity of $\cal Z_n$, we then have that
	$$\P\left(|Z(n)|< m\right)\leq \P\left(\cal Z_n(2m-1)\geq 1\right)\leq 2m p_n\,,$$
	(in the right-hand side inequality we use the union bound) and hence, 
	$$0<\epsilon<\P\left(|Z(n)|\leq cn^{2/3}\right)\leq 2cn^{2/3}p_n\,,$$
	which implies that
	$$\P\left(H_0\geq n\right)=p_n\geq \frac{\epsilon}{2cn^{2/3}}\,,$$
	and finishes the proof of Theorem \ref{thm:tail}.
	\newline
	
	\noindent\paragraph{\bf Proof of Theorem \ref{thm:maxscal}} We use again \eqref{counting2}: 
	$$\P\left(|Z(n)|< m\right)\leq \P\left(\cal Z_n(2m-1)\geq 1\right)\leq \P\left(H(2m)\geq n\right)\,,$$
	and thus,
	$$\P\left(\frac{H(m)}{2^{-1}m^{3/2}}\geq r\right)\geq \P\left( \frac{Z(n)}{2^{2/3}n^{2/3}}\in\left(\frac{-1}{2r^{2/3}},\frac{1}{2r^{2/3}}\right)\right)\,,$$
	where $n=\lfloor r2^{-1}(2m)^{3/2}\rfloor$, which shows that 
	$$\GG(r)\geq \P\left( X \in\left(\frac{-1}{2r^{2/3}},\frac{1}{2r^{2/3}}\right]\right)\,.$$
	\newline
	
	\begin{rem} The reason for conjectures \eqref{tailconj}, \eqref{scalheight1} and \eqref{scalheight2}, lyes in the expected limiting behaviour of the point process $\cal Z_n$. Indeed, by seeing roots as maximisers \eqref{maximiser}, if \eqref{scalelpp1} is true then we must have that 
		$$\lim_{n\to\infty}X_n(y)\stackrel{dist.}{=}X(y)\,\,\mbox{ (as process in $y\in\R$) }\,,$$
		where 
		$$X_n(y):=\argmax_{x}\left\{\frac{L\left((0,x)_n,(1,y)_n\right)-4n}{2^{4/3}n^{1/3}}\right\}\,.$$
		Thus, under assumption \eqref{scalelpp1}, we have
		$$\lim_{n\to\infty}\cal Z_n(2^{2/3}xn^{2/3})\stackrel{dist.}{=}\cal U(x)\,.$$
		To get \eqref{tailconj} one also needs to use \eqref{counting3}. For the scaling behaviour of the weighted substrate model we address the reader to \cite{Pi}.
	\end{rem}

\end{document}